\theoremstyle{plain}
\newtheorem{theorem}{Theorem}[section]
\newtheorem{lemma}[theorem]{Lemma}
\newtheorem{proposition}[theorem]{Proposition}
\newtheorem{corollary}[theorem]{Corollary}
\theoremstyle{definition}
\newtheorem{definition}[theorem]{Definition}
\newtheorem{remark}[theorem]{Remark}
\theoremstyle{remark}
\newtheorem{example}[theorem]{Example}
\numberwithin{equation}{section}
\numberwithin{figure}{section}
\numberwithin{table}{figure}
\newcommand{\pt}[1]{\left({#1}\right)}
\newcommand{\pq}[1]{\left[{#1}\right]}
\newcommand{\rest}[2]{\left.{#1}\right|_{#2}}
\newcommand{\pg}[1]{\left\{{#1}\right\}}
\newcommand{\abs}[1]{\left|{#1}\right|}
\newcommand{\vol}{\operatorname{Vol}}
\newcommand{\R}{\mathbb{R}}
\newcommand{\C}{\mathbb{C}}
\newcommand{\ep}[2]{e_{\alpha_{#1}^{#2}}}
\newcommand{\emp}[2]{e_{-\alpha_{#1}^{#2}}}
\newcommand{\adws}[1]{\pq{W,\sigma\pt{#1}}}
\DeclareMathOperator{\SU}{SU}
\DeclareMathOperator{\I}{I_\lambda}
\title[]{$p$-K\"ahler structures on fibrations and reductive Lie groups}
\author{Anna Fino}
\address[Anna Fino]{Dipartimento di Matematica \lq\lq Giuseppe Peano\rq\rq \\ Universit\`a di Torino\\
Via Carlo Alberto 10\\
10123 Torino\\ Italy\\
and  Department of Mathematics and Statistics Florida International University\\
  Miami Florida, 33199, USA}
 \email{annamaria.fino@unito.it, afino@fiu.edu}
\author{Gueo Grantcharov}
\address[Gueo Grantcharov]{Department of Mathematics and Statistics \\
Florida International University\\
Miami, FL 33199, United States
\& Institute of Mathematics and
Informatics, Bulgarian Academy of Sciences\\
 8 Acad. Georgi Bonchev Str. 1113 Sofia, Bulgaria}
\email{grantchg@fiu.edu}
\author{Asia Mainenti}
\address[Asia Mainenti]{“Simion Stoilow” Institute of Mathematics of the Romanian Academy\\
 Calea Grivi\c{t}ei 21, Bucharest, Romania}
 \email{asia.mainenti@imar.ro}
\thanks{}
\keywords{quasi-regular  fibrations,  reductive Lie groups,  $p$-Kähler  structures, balanced metrics}
\subjclass[2020]{Primary: 53C55; Secondary: 53C25, 22E46}
\begin{document}

\begin{abstract}  
We investigate the existence of  $p$-K\"ahler structures  on  two classes of complex manifolds: on quasi-regular  fibrations, with particular emphasis on complex homogeneous spaces, and on reductive Lie groups endowed with invariant complex structures. In the latter setting, we construct  non-regular complex structures on the Lie algebras $\mathfrak{sl}(2m-1,\mathbb{R})$ for $m \ge 2$ and show that these structures admit compatible balanced metrics, providing   new  explicit examples of balanced manifolds.
\end{abstract}

 \maketitle

\section{Introduction}

$p$-K\"ahler structures are generalizations of K\"ahler metrics, defined as closed transverse $(p,p)$-forms \cite{AA}. Transversality is a positivity notion that we will better discuss in what follows. For $p=1$, we recover K\"ahler metrics, whereas $(n-1)$-K\"ahler structures are $(n-1)$-th powers of balanced Hermitian metrics. The case $p=n$ is trivial, as every top form is closed. Manifolds admitting such structures were extensively studied by L. Alessandrini and G. Bassanelli, especially on compact holomorphically parallelizable manifolds, smooth proper modifications of compact K\"ahler manifolds, and in comparison with the K\"ahler and balanced case. More recently, their behavior under small deformations of the complex structure has been studied, see for instance \cite{LG} and the references therein.

Certain quasi-regular  fibrations, including examples arising from compact complex homogeneous spaces, can be endowed with balanced metrics, providing natural instances of non-K\"ahler manifolds supporting special Hermitian structures.
However, very little is known about the existence of $p$-K\"ahler structures on these manifolds. In \cite{FM24,FMarxiv}, the first and third authors obtained obstruction results for the existence of $p$-K\"ahler structures in the class of solvmanifolds, in particular on nilmanifolds and on solvmanifolds whose associated Lie algebras contain large abelian ideals. These results highlight the strong influence of the underlying algebraic structure on the existence of $p$-K\"ahler structures. 
In contrast,{there is minimal knowledge} about their existence on reductive Lie groups, where the presence of a semisimple component introduces new challenges and phenomena not present in the solvable setting.

In this paper, we study the existence of $p$-K\"ahler structures on quasi-regular  fibrations and reductive Lie groups endowed with invariant complex structures. Quasi-regular  fibrations form a natural class of holomorphic fibrations over complex orbifolds, encompassing many important examples, including compact complex homogeneous spaces. A particularly notable subclass of these fibrations is provided by the Tits fibration. For a compact homogeneous space $M = G/H$, the Tits fibration expresses $M$ as a holomorphic torus bundle 
\[
\pi: G/H \longrightarrow G/K,
\] 
where $K$ is the centralizer of a torus in $G$ and the fiber $H/K$ is a complex torus. This construction allows one to study the existence of $p$-K\"ahler structures on $M$ via the characteristic classes of the base $G/K$.

In Section \ref{section3} we provide obstructions to the existence of $p$-K\"ahler structures on quasi-regular  fibrations, in terms of characteristic classes of orbifold line bundles on the base. Specifically, given a compact complex orbifold $X$ and a quasi-regular  fibration $\pi: M \to X$ with an orbifold line bundle $L$ satisfying $c_1(L) \ge 0$ and $\pi^* c_1(L) = 0$, we show that $M$ does not admit an $(m-j)$-K\"ahler structure for any $1 \le j \le k$, where $k$ is determined by the non-vanishing powers of $c_1(L)$ and $m = \dim_{\mathbb{C}} M$ (\Cref{fibration}).

In the non-compact setting, analogous fibrations exist when $G$ is a reductive Lie group. We recall that a Lie group is reductive if its Lie algebra is a direct sum of an abelian ideal and a semisimple ideal. By \cite{Morimoto}, every even-dimensional, real, reductive Lie group admits invariant complex structures. Every invariant complex structure on $G_0$ is determined by a choice of an $n$-dimensional complex subalgebra $\mathfrak{q}$ of the complexification $\mathfrak{g}$, with $\mathfrak{g} = \mathfrak{q} \oplus \sigma(\mathfrak{q})$, where {$2n$ is the real dimension of $G_0$ and} $\sigma$ is the complex conjugation with respect to $\mathfrak{g}_0$. Following the classification of Snow, reductive Lie algebras split into two classes: Class I, where all simple factors of $\mathfrak{g}$ are of inner type, and Class II, consisting of all other cases.

In many cases, by \cite{Snow}, a reductive Lie group $G_0$ with an invariant complex structure can be fibered holomorphically over an open $G_0$-orbit in a homogeneous projective rational manifold. This gives an effective method to study the geometry of the complex structure in $G_0$, using the results by Wolf in \cite{Wolf}. When $G_0$ does not have such a fibration, then $G_0$ is either abelian or a complex Lie group with certain exceptional structures, but also in these cases the complex geometry of $G_0$ is understood. In the special case when there is a maximal abelian subgroup of $G_0$ which is compact and whose subalgebra is {invariant with respect to the complex structure}, the fibers are complex abelian varieties and the base is an open subset of a flag manifold. This is a dual picture to the classical Tits fibration of an even-dimensional compact Lie group over its flag manifold. From a differential geometric viewpoint, both have the structure of a holomorphic torus fibration over a complex base.

In Section \ref{section4} we prove several results concerning the existence of $p$-K\"ahler and related Hermitian structures on reductive Lie groups. In particular, for compact, non-abelian, even-dimensional reductive Lie groups $G_0$, we show that $(n-k)$-K\"ahler structures cannot exist for $1 \le k \le r_0$, where $r_0$ is the number of positive roots of the complexified Lie algebra. 
{We note that this is consistent with the analogous results in \Cref{section3} for compact homogeneous spaces, as the base of the Tits fibration in this case is a K\"ahler-Einstein manifold of complex dimension $r_0$.}
Similarly, for non-compact reductive Lie groups with regular invariant complex structures, we exhibit obstructions to the existence of $(n-2)$-pluriclosed structures, generalizing earlier results known for simple groups of inner type.

Finally,  in Section \ref{section5} we construct a family of non-regular complex structures on $\mathfrak {sl}(2m-1,\R)$ for all $m \geq 2$ and show that, while these structures admit compatible balanced metrics, they do not support pluriclosed metrics.
Related topics in the more general setting of non-compact semisimple groups are being investigated in \cite{Kwong}.

We briefly outline the structure of the paper. In Section \ref{section2}, we recall basic notions on $p$-K\"ahler and $p$-pluriclosed structures on complex manifolds,  with emphasis on invariant structures on Lie groups. Section \ref{section3} is devoted to quasi-regular fibrations, where we discuss the existence of balanced metrics and the lack of $p$-K\"ahler structures in this setting. In Section \ref{section4}, we study complex structures on even-dimensional reductive Lie groups, proving obstructions to the existence of $p$-K\"ahler and $p$-pluriclosed metrics in both compact and non-compact cases. Section \ref{section5} focuses on the case of $\mathfrak{sl}(2m - 1,\R)$.

\section{\texorpdfstring{$p$-K\"ahler and $p$}{p-Kähler and p}-pluriclosed structures} \label{section2}
In this section we recall the definition and first properties of $p$-K\"ahler and $p$-pluriclosed structures on  complex manifolds, including their relation to various types of special Hermitian metrics.
Such structures are defined combining closure, with respect to some differential operator, with positivity.
As far as the latter is concerned, there are multiple possible definitions of positivity on differential forms on a complex manifold of complex dimension $n$, all depending on the choice of a volume form $\vol$, i.e. a never vanishing $(n,n)$-form.
The definition for top forms is intuitive, as positive $(n,n)$-forms are defined as non-negative multiples of $\vol$, and strictly positive $(n,n)$-forms are positive multiples of $\vol$.
On the other hand, for $(p,p)$-forms, with $p<n$, there are many positivity notions one can define, see for instance \cite{hk}, and the distinction among some of them relies on the notion of decomposability. 
A (real) $(p,p)$-form is called \textit{decomposable} if it can be written as  $i\,\eta_1\wedge\overline{\eta_1}\dots\wedge\,i\,\eta_p\wedge\overline{\eta_p}$, for some $(1,0)$-forms $\eta_1,\dots,\eta_p$. 
\begin{definition}
A \textit{strongly positive} $(p,p)$-form is a convex combination of {decomposable} $(p,p)$-forms.
\end{definition}
This defines a closed convex cone in the space of $(p,p)$-forms, and via the duality pairing given by the wedge product, we can define the dual cone, whose elements we will call \textit{weakly positive} forms. 
The forms in the interior of both the cones are called positive in the strict sense and they are non-degenerate, in the sense that their restrictions to any $p$-dimensional complex sub-bundle of the tangent bundle, are volume forms. 
For the sake of clarity, we write down explicitly the definition of strictly weakly positive forms.
\begin{definition}
    A $(p,p)$-form $\Omega$ is \textit{strictly weakly positive}, or \textit{transverse} if and only if $\Omega\wedge\Theta$ is a positive multiple of $\vol$, for every strongly positive $(n-p,n-p)$-form $\Theta$.
\end{definition}
Despite the common terminology in use in the field being \textit{strict weak positivity}, in what follows, for simplicity, we will call such forms transverse, with the notation introduced in \cite{sullivan}.

In light of this, we can now introduce $p$-K\"ahler and $p$-pluriclosed structures on a complex manifold.

\begin{definition}[\cite{AA,Ales}]
    Let $(M,J)$ be a complex manifold of complex dimension $n$, let $p$ be an integer with $0\le p\le n$, and $\Omega$ a (real) transverse $(p,p)$-form.
Then, $\Omega$ is \textit{$p$-K\"ahler} if $d\Omega=0$, and  $\Omega$ is \textit{$p$-pluriclosed} if $\partial\bar\partial\Omega=0$.
The complex manifold $(M,J)$ is \textit{$p$-K\"ahler} if it admits a $p$-K\"ahler structure, and \textit{$p$-pluriclosed} if it admits a $p$-pluriclosed structure.
\end{definition}

Whilst the definition is well-posed for $0\le p\le n$, the interesting and non-trivial cases are for $1\le p\le n-1$.
For some particular values of $p$, we recover well-known notions of special Hermitian metrics, of which we recall the definitions.

\begin{definition}
Let $\omega$ be the fundamental $(1,1)$-form associated to a Hermitian metric on a complex manifold $(M,J)$ of complex dimension $n$.
    \begin{enumerate}
        \item $\omega$ is \textit{K\"ahler} if $d\omega=0$;
        \item $\omega$ is \textit{pluriclosed}, or \textit{strong K\"ahler with torsion (SKT)}, if $\partial\bar\partial\omega=0$;
        \item $\omega$ is \textit{balanced} if $d\omega^{n-1}=0$;
        \item $\omega$ is \textit{Gauduchon} if $\partial\bar\partial\omega^{n-1}=0$;
        \item $\omega$ is \textit{astheno-K\"ahler} if $\partial\bar\partial\omega^{n-2}=0$.
    \end{enumerate}
\end{definition}

\begin{remark}
Then, the following implications follow trivially from the definitions.
Let $(M,J)$ be a complex manifold of complex dimension $n$.
    \begin{enumerate}
        \item $(M,J)$ is $1$-K\"ahler if and only if it is K\"ahler;
        \item $(M,J)$ is $1$-pluriclosed if and only if it is pluriclosed;
        \item $(M,J)$ is $(n-1)$-K\"ahler if and only if it is balanced;
        \item $(M,J)$ is $(n-1)$-pluriclosed if and only if it is Gauduchon;
        \item if $(M,J)$ is astheno-K\"ahler, then it is $(n-2)$-pluriclosed;
        \item for every $0\le p\le n$, if $(M,J)$ is $p$-K\"ahler, then it is $p$-pluriclosed.
    \end{enumerate}
\end{remark}
    
    \smallskip
We conclude this section mentioning that the Harvey-Lawson criterion for the existence of K\"ahler metrics can be generalized to $p$-K\"ahler and $p$-pluriclosed structures.
\begin{proposition}[\cite{AA,Ales}]
    A compact complex manifold $(M,J)$ is $p$-K\"ahler if and only if all strongly positive currents of bidimension $(p,p)$ that are $(p,p)$-components of a boundary are trivial.
    Similarly, $(M,J)$ is $p$-pluriclosed if and only if all strongly positive currents of bidimension $(p,p)$ that are $(p,p)$-components of a $\partial\bar\partial$-boundary are trivial.
\end{proposition}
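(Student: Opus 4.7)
The plan is to prove both equivalences as classical separation statements between convex cones of currents via the Hahn-Banach theorem, following the original strategy of Alessandrini-Andreotti and Alessandrini-Bassanelli. The natural framework is the space $\mathcal{D}'^{p,p}(M)$ of $(p,p)$-currents on $M$ with its weak$^*$ topology, in duality with smooth $(p,p)$-forms on the compact manifold $M$.

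First I would dispatch the easy implication. Assume $\Omega$ is a $p$-K\"ahler form and let $T$ be a strongly positive $(p,p)$-current which equals the $(p,p)$-component of a boundary $dS$. Since $\Omega$ has pure type $(p,p)$, pairing it against $dS$ only sees the $(p,p)$-component, hence by Stokes for currents
\[
\langle T, \Omega \rangle = \langle dS, \Omega \rangle = \pm \langle S, d\Omega \rangle = 0.
\]
But transversality of $\Omega$ forces $\langle T, \Omega \rangle > 0$ for every nonzero strongly positive $(p,p)$-current $T$, and hence $T=0$. The $p$-pluriclosed case is identical, replacing $d$ by $\partial\bar\partial$ and using $\partial\bar\partial\Omega = 0$.

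For the converse, the idea is to apply Hahn-Banach separation inside $\mathcal{D}'^{p,p}(M)$. I would consider the convex cone $\mathcal{P}$ of strongly positive $(p,p)$-currents, intersected with the affine hyperplane of currents of unit mass with respect to a fixed background Hermitian metric, obtaining a convex base $\mathcal{K}$. Against this I would place the closed vector subspace $\mathcal{B}$ consisting of $(p,p)$-components of boundaries $dS$ (respectively, of $\partial\bar\partial$-boundaries $\partial\bar\partial S$). The hypothesis is exactly that $\mathcal{K}\cap\mathcal{B}=\emptyset$. Applying Hahn-Banach in the weak$^*$ topology produces a continuous linear functional on $\mathcal{D}'^{p,p}(M)$, i.e.\ a smooth $(p,p)$-form $\Omega$, such that $\langle T,\Omega\rangle>0$ for every nonzero $T\in\mathcal{P}$ and $\langle B,\Omega\rangle=0$ for every $B\in\mathcal{B}$. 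The first condition is transversality of $\Omega$; the second, by choosing $B=(dS)^{(p,p)}$ for arbitrary $S$ and integrating by parts, is equivalent to $d\Omega=0$ (or to $\partial\bar\partial\Omega=0$ in the pluriclosed case).

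The main obstacle is the functional-analytic setup: one has to verify that the base $\mathcal{K}$ is weak$^*$ compact, which uses the fact that strongly positive currents are of order zero so their mass controls them and Banach-Alaoglu applies, and one has to ensure that the separating functional is represented by a \emph{smooth} form rather than a generic distribution, which is achieved by performing the separation in the duality between $\mathcal{D}'^{p,p}(M)$ and smooth $(p,p)$-forms. The remaining checks, that $\mathcal{B}$ is weak$^*$ closed and that taking $(p,p)$-components is compatible with the pairing, are routine but need to be stated carefully.
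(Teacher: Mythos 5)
Your overall strategy is the right one, and it is in fact the strategy of the sources \cite{AA,Ales} that the paper cites for this statement (the paper itself gives no proof, and only ever uses the easy direction, via Stokes, as an obstruction). Your first paragraph proving that direction is correct, modulo the small observation that $\langle T,\Omega\rangle>0$ for $T\neq 0$ strongly positive and $\Omega$ transverse requires noting that $T$ has measure coefficients with positive total mass, so the pointwise strict positivity of $\Omega$ on the strongly positive cone integrates to a strictly positive pairing. The Hahn--Banach setup for the converse --- compact base $\mathcal K$ of the cone via Banach--Alaoglu and the mass bound, separation in the duality with smooth forms so that the separating functional is a smooth $(p,p)$-form, transversality from strict positivity on $\mathcal K$ and closedness from vanishing on the subspace of components of boundaries --- is also the correct architecture.

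The one point where your proposal undersells the difficulty is the parenthetical claim that weak$^*$ closedness of $\mathcal B$ is ``routine.'' It is the crux of the Harvey--Lawson-type argument. Hahn--Banach separates a compact convex set from a \emph{closed} convex set, so if $\mathcal B$ is not closed you can only separate $\mathcal K$ from $\overline{\mathcal B}$, which proves a weaker statement (triviality of strongly positive currents that are \emph{limits} of components of boundaries). The subspace $\mathcal B$ is the image of the space of exact $2p$-dimensional currents under the bidimension-$(p,p)$ projection, and images of closed subspaces under continuous linear maps need not be closed; establishing closedness requires the closed-range property of $d$ (respectively $\partial\bar\partial$) on currents, which comes from elliptic theory together with the finite-dimensionality of de Rham (respectively Aeppli/Bott--Chern) cohomology on the compact manifold $M$. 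This is precisely the technical lemma that occupies most of the proofs in \cite{AA,Ales}, so it should be stated as the key step rather than deferred as a routine verification.
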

As a corollary, the existence of strongly positive, exact $(n-p,n-p)$-forms is an obstruction to the existence of $p$-K\"ahler structures, on a given compact complex manifold, and the analogous statement holds for $p$-pluriclosed structures.
A more simple proof of this result relies on the Stokes theorem, and it will be recalled in what follows, when needed.
Furthermore, we note that the same argument can be adapted to unimodular Lie algebras.

\section{Quasi-regular fibrations} \label{section3}

We recall that a  complex orbifold $X$  is  a normal complex space locally given by charts written as quotients of smooth coordinate charts. More precisely,  $X$  is a topological space together with a cover of coordinate charts where each element of the cover is homeomorphic to a quotient of an open in $\mathbb{C}^n$ containing the origin by a finite group $G_x$, $x\in X$, and the transition functions are covered by holomorphic maps.  The complex dimension of the orbifold  is the number $n$ in $\mathbb{C}^n/G_x$.

Assume that $M$ is a {compact}  complex manifold and $\pi: M\rightarrow X$ is a holomorphic fibration over  a compact complex orbifold $X$.
In the paper we will call $\pi$ a quasi-regular fibration on $M$ with a base $X$.
A special class of these fibrations is that of toric Seifert bundles, determined by  rational divisors on the base orbifold and constructed via Seifert  $S^1$-bundles \cite{HaeSal}.
We can prove the following:

\begin{theorem}\label{fibration}
Let $\pi:M\rightarrow X$ be a quasi-regular fibration such that $X$ has an orbifold line bundle  $L$ with $c_1(L)\geq 0$ and $\pi^*c_1(L) = 0\in H^2_{dR}(M, \R)$. 
Let $k$ be a positive number such that $c_1(L)^k\neq 0$ and  $c_1(L)^{k+1} = 0$. 
Then $M$ does not admit a $(m-j)$-K\"ahler structure,   for any $1\le j\le k$, where $m$ is the complex dimension of $M$.
\end{theorem}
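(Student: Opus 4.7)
The plan is to invoke the obstruction form of the Harvey--Lawson criterion recalled at the end of Section~\ref{section2}: if $M$ admits a non-zero strongly positive exact $(j,j)$-form on a dense open set, then $(m-j)$-K\"ahler structures are forbidden. So for each fixed $j$ with $1 \le j \le k$, I would produce such a form by pulling back a suitable power of the orbifold first Chern form of $L$.

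First, I would fix a smooth orbifold representative $\omega_L$ of $c_1(L)$ which is semi-positive (this is the content of $c_1(L)\ge 0$ combined with a standard Chern--Weil/hermitian metric argument on orbifold line bundles). The key algebraic observation is that $c_1(L)^j\ne 0$ in $H^{2j}_{dR}(X,\R)$ for every $1\le j\le k$: indeed, if $c_1(L)^j=0$ then $c_1(L)^k = c_1(L)^{k-j}\wedge c_1(L)^j = 0$, contradicting the hypothesis. Since $\omega_L$ is a smooth semi-positive $(1,1)$-form on $X$, its $j$-th wedge power $\omega_L^j$ is a smooth strongly positive $(j,j)$-form on $X$, and it represents the non-zero class $c_1(L)^j$. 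In particular $\omega_L^j$ cannot vanish identically, and being strongly positive it is actually strictly positive (i.e.\ lies in the interior of the strongly positive cone) on some non-empty open subset $U\subset X$ where $\omega_L$ has rank $\ge j$.

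Next I would transfer this to $M$. Since $\pi^*c_1(L)=0$ in $H^2_{dR}(M,\R)$, the closed $(1,1)$-form $\pi^*\omega_L$ is $d$-exact, say $\pi^*\omega_L = d\alpha$ for a $1$-form $\alpha$ on $M$. Using that $\pi^*\omega_L$ is closed, Leibniz gives
\[
\pi^*\omega_L^j \;=\; (\pi^*\omega_L)^j \;=\; d\bigl(\alpha\wedge (\pi^*\omega_L)^{j-1}\bigr),
\]
so $\pi^*\omega_L^j$ is an exact, smooth, strongly positive $(j,j)$-form on $M$. Because $\pi$ is a holomorphic submersion on a dense open set of $M$, the pull-back $\pi^*\omega_L^j$ is strictly positive on an open subset of $\pi^{-1}(U)$.

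Finally, suppose for contradiction that $\Omega$ is an $(m-j)$-K\"ahler form on $M$. Since $\Omega$ is transverse and $\pi^*\omega_L^j$ is strongly positive, the wedge $\Omega\wedge\pi^*\omega_L^j$ is a non-negative $(m,m)$-form on $M$ which is strictly positive on a non-empty open set, hence
\[
\int_M \Omega\wedge \pi^*\omega_L^j \;>\; 0.
\]
On the other hand, using $d\Omega=0$ and the exact primitive above, Stokes' theorem gives
\[
\int_M \Omega\wedge \pi^*\omega_L^j \;=\; \int_M d\bigl(\Omega\wedge \alpha\wedge (\pi^*\omega_L)^{j-1}\bigr) \;=\; 0,
\]
a contradiction that proves the theorem. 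The main technical point to be careful about is step~2: checking that a smooth semi-positive representative of $c_1(L)$ exists on the orbifold and that its non-trivial cohomology class forces strict positivity on an open subset, and that pull-back to $M$ through the (orbifold) fibration produces an honest smooth form on which the Stokes computation is valid; both are standard but need a short justification in the orbifold setting.
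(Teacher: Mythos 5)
Your proof is correct and follows essentially the same route as the paper's: choose a semi-positive representative $\omega$ of $c_1(L)$, pull it back to an exact form on $M$ using $\pi^*c_1(L)=0$, and derive a contradiction between $\int_M \Omega\wedge\pi^*\omega^j>0$ (from transversality of $\Omega$ and strong positivity of $\omega^j$) and $\int_M \Omega\wedge\pi^*\omega^j=0$ (from Stokes). You are in fact a bit more careful than the paper in justifying that $\omega^j$ is non-zero for all $1\le j\le k$ (via the cohomological step $c_1(L)^j=0\Rightarrow c_1(L)^k=0$) and in only claiming positivity of the integrand on an open set rather than everywhere, which is all the argument needs.
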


\begin{proof}
In the notation introduced above, $c_1(L)\geq 0$ means that there is a representative $\omega\in c_1(L)$ which is strongly positive on the orbifold $X$.
Let $\theta$ be a $1$-form  such that $d\theta = \pi^*\omega\in \pi^*c_1(L)$. 
Then $\omega^k\neq 0$ and $\omega^{k+1}=0$,  because $\omega^{k+1}$ is closed and non-negative representing the zeroth cohomology class. 
By assumption, $\omega$ is a strongly positive $(1,1)$-form, of rank $k$, so in particular $\omega^j$ is strongly positive and non-zero, for all $1\le j\le k$.
Then for a hypothetical $(m-j)$-K\"ahler structure $\Omega$, with $1\le j\le k$, we have 
\[ \Omega\wedge \pi^*(\omega^j) = \Omega\wedge(d\theta)^j > 0, \hspace{.2in} \int_M\Omega\wedge (d\theta)^j =0,
\]
a contradiction.
\end{proof}

Basic examples are the compact complex homogeneous spaces. If a compact Lie group $G$ acts transitively by biholomorphisms on a manifold $M$, then $M=G/H$ for a subgroup $H$, which is the stationary subgroup at a point $o\in M$. It is known that $G/H$ admits a K\"ahler metric if and only if $H$ is a centralizer of a torus in $G$, and in this case $G/H$ is called generalized flag manifold. For such spaces it is known that they are rational algebraic Fano varieties and admit K\"ahler-Einstein metrics. The following Theorem for the structure of $G/H$ in general is known:

\begin{theorem}[\cite{Tits,wang54}] \label{thmhomog}
Every compact complex homogeneous space $M=G/H$ admitting a transitive action of a compact  Lie group $G$ by biholomorphisms, is a total space of a holomorphic torus fibration $\pi: G/H\rightarrow G/K$,  where $K$ is a centralizer of a torus with $[K,K]=[H,H]$ and $H/K$ a complex tori. The base space $G/K$ is a generalized flag manifold.
\end{theorem}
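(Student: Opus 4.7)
The plan is to extract $K$ as the intersection with $G$ of the parabolic subgroup of $G^{\mathbb{C}}$ canonically associated to the $(0,1)$-subalgebra determined by $J$, and then verify that the fibration $G/H\to G/K$ has the stated properties.

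An invariant complex structure $J$ on $M=G/H$ is encoded by an $\operatorname{Ad}(H)$-invariant complex Lie subalgebra $\mathfrak{q}\subset\mathfrak{g}^{\mathbb{C}}$ with $\mathfrak{q}+\overline{\mathfrak{q}}=\mathfrak{g}^{\mathbb{C}}$ and $\mathfrak{q}\cap\overline{\mathfrak{q}}=\mathfrak{h}^{\mathbb{C}}$, closure of $\mathfrak{q}$ under the bracket being equivalent to the Newlander--Nirenberg integrability of $J$. I would consider the normalizer $\mathfrak{p}:=\{x\in\mathfrak{g}^{\mathbb{C}}:[x,\mathfrak{q}]\subset\mathfrak{q}\}$. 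The key structural step, relying on compactness of $G$ and reductiveness of $\mathfrak{h}$, is to show that $\mathfrak{p}$ is a parabolic subalgebra of $\mathfrak{g}^{\mathbb{C}}$ whose Levi factor $\mathfrak{l}$ contains $\mathfrak{h}^{\mathbb{C}}$; concretely, one exhibits a Cartan subalgebra $\mathfrak{t}^{\mathbb{C}}\subset\mathfrak{p}$ and a system of positive roots for which $\mathfrak{p}$ is spanned by $\mathfrak{t}^{\mathbb{C}}$ together with all positive root spaces and those negative root spaces whose roots vanish on the connected center of $\mathfrak{l}$.

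Setting $K:=G\cap L$, where $L$ is the connected subgroup of $G^{\mathbb{C}}$ integrating $\mathfrak{l}$, one obtains $H\subset K$. Since parabolic quotients of complex reductive groups are rational homogeneous, the induced map $G/K\cong G^{\mathbb{C}}/P$ identifies the base as a generalized flag manifold, and $K$ is automatically the centralizer in $G$ of the torus $Z(L)^{0}\cap G$. The intersection $\mathfrak{q}\cap\mathfrak{k}^{\mathbb{C}}$ projects onto a complex subspace of $\mathfrak{k}^{\mathbb{C}}/\mathfrak{h}^{\mathbb{C}}$ supplementary to its conjugate, giving a left-invariant complex structure on the fiber $K/H$; because $\mathfrak{k}/\mathfrak{h}$ is abelian and lies in the center of $\mathfrak{k}$ modulo $\mathfrak{h}$, this complex structure is bi-invariant, and a compact real Lie group equipped with a bi-invariant complex structure is a complex torus. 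Descent of $J$ to a holomorphic map $\pi\colon G/H\to G/K$ then follows from the $\operatorname{Ad}(K)$-invariance of $\mathfrak{p}$.

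The inclusion $[H,H]\subset[K,K]$ is immediate, while the reverse inclusion follows from the fact that $\mathfrak{k}/\mathfrak{h}$ is abelian (so $[\mathfrak{k},\mathfrak{k}]\subset\mathfrak{h}=\mathfrak{z}(\mathfrak{h})\oplus[\mathfrak{h},\mathfrak{h}]$) combined with $\mathfrak{z}(\mathfrak{h})\subset\mathfrak{z}(\mathfrak{k})$ and semisimplicity of $[\mathfrak{k},\mathfrak{k}]$, which force the $\mathfrak{z}(\mathfrak{h})$-component of any element of $[\mathfrak{k},\mathfrak{k}]$ to vanish. The main obstacle of the whole argument is the first step: proving that the normalizer of the integrable $(0,1)$-subalgebra $\mathfrak{q}$ is parabolic with a Levi containing $\mathfrak{h}^{\mathbb{C}}$. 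This is the point where compactness of $G$, the $\operatorname{Ad}$-invariant inner product it provides, reductiveness of $\mathfrak{h}$, and the integrability of $J$ all interact essentially, and it is the only part of the proof that is not a formal consequence of the standard structure theory of complex reductive Lie algebras.
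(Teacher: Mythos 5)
The paper does not prove this statement at all: it is imported verbatim from Tits and Wang, so there is no internal argument to compare yours against. Judged on its own, your proposal follows the classical strategy behind those references (normalize the $(0,1)$-isotropy subalgebra, show the normalizer is parabolic, read off the flag-manifold base and the parallelizable fiber), but it leaves both decisive steps unproved. The first is the one you flag yourself: parabolicity of $\mathfrak{p}=N_{\mathfrak{g}^{\mathbb{C}}}(\mathfrak{q})$. What you offer there --- ``one exhibits a Cartan subalgebra and a system of positive roots for which $\mathfrak{p}$ is spanned by $\dots$'' --- is a description of what a parabolic subalgebra looks like, not an argument that $\mathfrak{p}$ is one; normalizers of subalgebras are not parabolic in general, and this is exactly where the content of the theorem lives. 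The standard way to close the gap is the Tits map $gH\mapsto\operatorname{Ad}(g)\mathfrak{q}$, a $G$-equivariant holomorphic map from the compact manifold $M$ into a Grassmannian of $\mathfrak{g}^{\mathbb{C}}$: its image is a compact complex $G$-orbit, hence (comparing $\mathfrak{g}\cdot x$ with $\mathfrak{g}^{\mathbb{C}}\cdot x$ at a point of the orbit) a closed $G^{\mathbb{C}}$-orbit in projective space, so the isotropy of the image point, which is precisely $N_{G^{\mathbb{C}}}(\mathfrak{q})$, is parabolic. Without some such argument the construction of $K$ never gets off the ground.

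The second gap is that you assert $\mathfrak{k}/\mathfrak{h}$ is abelian and then lean on this twice, for the torus fiber and for $[K,K]=[H,H]$; as written the latter is close to circular, since abelianness of $\mathfrak{k}/\mathfrak{h}$ is essentially equivalent to $[\mathfrak{k},\mathfrak{k}]\subset\mathfrak{h}$. This is not free, and it is the second place where compactness of $G$ enters essentially. The argument you need: $\mathfrak{q}\cap\mathfrak{l}$ and $\overline{\mathfrak{q}}\cap\mathfrak{l}$ are ideals of the reductive algebra $\mathfrak{l}=\mathfrak{p}\cap\overline{\mathfrak{p}}$ (because $\mathfrak{l}$ normalizes $\mathfrak{q}$ and $\overline{\mathfrak{q}}$), they span $\mathfrak{l}$ and meet in $\mathfrak{h}^{\mathbb{C}}$, so $\mathfrak{l}/\mathfrak{h}^{\mathbb{C}}$ splits as a direct sum of two conjugate ideals; a simple ideal of its semisimple part must be stable under the conjugation defining the compact form (otherwise $\mathfrak{g}$ would contain a complex simple algebra as a real ideal, contradicting compactness), yet it would have to lie entirely in one of the two conjugate summands, which is impossible; hence the semisimple part vanishes and $\mathfrak{k}/\mathfrak{h}$ is abelian. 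Once this is in place, your deduction of $[K,K]=[H,H]$ and of the torus fiber does go through ($[\mathfrak{k},\mathfrak{k}]$ is then a semisimple ideal contained in $\mathfrak{h}$, so $[\mathfrak{k},\mathfrak{k}]=[[\mathfrak{k},\mathfrak{k}],[\mathfrak{k},\mathfrak{k}]]\subset[\mathfrak{h},\mathfrak{h}]$, and a compact quotient of an abelian group carrying an invariant complex structure is a complex torus).
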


The structure of the bundle $G/H\rightarrow G/K$ is determined by its characteristic classes which are elements in $H^2_{dR}(G/K, \mathbb{R})$. When for the induced complex structure on $G/K$, $c_1(G/K)$ is in the cone generated by these classes, the space $G/H$ has $c_1(G/H)=0$, see \cite[Theorem 2]{Grantcharov}.
\Cref{thmhomog} was generalized by D. Guan \cite{Guan02} to the case where $G$ is not necessarily compact, but $G/H$ admits an invariant volume form. 
In this case the Tits fibration is $\pi: G/H\rightarrow G_1/K\times D$, where $D$ is a complex parallelizable space, the fibers are tori, and $G_1/K$ is a generalized flag manifold.   
In this case we have:

\begin{corollary}\label{c_1=0}
Suppose that $G/H$ is a compact complex homogeneous space with vanishing first Chern class and invariant volume form, and assume the fibration $\pi: G/H\rightarrow G_1/K\times D$ has rank $r{=\dim_\C D}$, with $dim(G_1/K)=n$.
Then $G/H$ does not admit a $q$-K\"ahler structure, for any $r\le q<n{+r}$.
\end{corollary}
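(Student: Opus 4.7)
The approach is to invoke \Cref{fibration} after exhibiting a suitable orbifold line bundle on the base $X = G_1/K \times D$ of the Tits fibration. The natural candidate is $L := \operatorname{pr}_1^{*} K^{-1}_{G_1/K}$, the pullback of the anticanonical bundle of the flag factor under the first projection $\operatorname{pr}_1 \colon X \to G_1/K$. Because $G_1/K$ is a generalized flag manifold, it is Fano and K\"ahler--Einstein, so $c_1(L)$ admits a strongly positive representative; in particular $c_1(L) \ge 0$. Moreover, since $L$ is pulled back from the $n$-dimensional factor $G_1/K$, the class $c_1(L)^n$ is nonzero (represented by a positive multiple of the volume form of $G_1/K$ wedged with the constant $1$ on $D$), while $c_1(L)^{n+1} = 0$ for dimension reasons. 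Hence the integer $k$ appearing in \Cref{fibration} will be exactly $n$.

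The key verification is that $\pi^{*} c_1(L) = 0$ in $H^2(G/H, \R)$. Here one uses two facts: the complex parallelizable space $D$ satisfies $c_1(D) = 0$, so $c_1(X) = \operatorname{pr}_1^{*} c_1(G_1/K) = c_1(L)$; and the fibers of $\pi$ are complex tori, whose tangent bundles are trivial, so the relative tangent bundle of $\pi$ has vanishing first Chern class. Combining these gives $c_1(G/H) = \pi^{*} c_1(X) = \pi^{*} c_1(L)$. The standing hypothesis $c_1(G/H) = 0$ then yields $\pi^{*} c_1(L) = 0$, as required.

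With all hypotheses of \Cref{fibration} in place, we conclude that $G/H$ admits no $(m-j)$-K\"ahler structure for $1 \le j \le n$, where $m = \dim_{\C} G/H$. Unpacking this via the dimensional identity $m = n + r$ supplied by the Tits fibration under the assumptions of the corollary translates immediately into the non-existence of $q$-K\"ahler structures for $r \le q < n + r$, which is the desired statement.

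The step I expect to require the most care is the cohomological vanishing $\pi^{*} c_1(L) = 0$: it is not a formal consequence of the fibration structure but genuinely combines the hypothesis $c_1(G/H) = 0$ with the triviality of the vertical tangent bundle (tori fibers) and the parallelizability of $D$. Once this is established, the positivity of $c_1(L)$, the rank computation $k = n$, and the final dimension bookkeeping are routine.
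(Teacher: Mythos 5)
Your proposal is correct and follows essentially the same route as the paper: apply \Cref{fibration} with $L$ the (pullback of the) anticanonical bundle of the flag factor $G_1/K$, note $c_1(L)\ge 0$ with $c_1(L)^n\neq 0$ and $c_1(L)^{n+1}=0$ so that $k=n$, and conclude via $m=n+r$. The only difference is that you spell out the verification $\pi^*c_1(L)=0$ (via the triviality of the vertical tangent bundle and $c_1(D)=0$), a step the paper's two-line proof leaves implicit; your justification of it is sound.
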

\begin{proof}
We note that we can use \Cref{fibration}, with $L=-K_{G_1/K}$ the anti-canonical bundle, which is positive, and $c_1(L)^n>0$, while $c_1(L)^{n+1}=0$. 
So $k=n$ and the result follows, as the complex dimension of $G/H$ is $n+r$.
\end{proof}

\begin{remark}
\begin{enumerate}
\item As a particular instance, we recover \cite[Corollary 5.1]{FGV}, stating that if a compact complex homogeneous space $M=G/H$ with invariant volume admits a balanced metric, then its first Chern class $c_1(M)$ does not vanish.

\item Both \Cref{fibration} and \Cref{c_1=0} are still valid in much more general terms as long as there is a bundle which contains a characteristic class of appropriate numerical degree $k$.
A partial case of \Cref{c_1=0} is also the case of even-dimensional compact Lie groups. However, instead of formulating the general case, we consider an example below, which elucidates the process of checking the validity of the conclusions. 
\end{enumerate}
\end{remark}

First we start by providing some details in the case of real dimension eight.

\begin{corollary}
Suppose that $M$ is a compact complex manifold  of complex dimension $4$, with a  quasi-regular fibration $\pi\colon M\rightarrow X$, and an orbifold Line bundle $L$ as in \Cref{fibration}, and one of the following is valid:
\begin{enumerate}[label=\roman*)]
    \item  $X$ is a  compact complex orbisurface  and $c_1(L)^2\neq 0$, or
\item  {$X$ a complex orbifold  of  complex dimension $3$} and $c_1^2(L)\geq 0$, but $c_1^2(L)\neq 0$.
\end{enumerate}
Then $M$ does not admit a 2-K\"ahler structure.
\end{corollary}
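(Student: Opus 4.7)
The plan is to reduce both (i) and (ii) to a direct application of \Cref{fibration}. Since the complex dimension of $M$ is $m = 4$, ruling out a $2$-K\"ahler structure corresponds to the case $j = m - 2 = 2$ in the conclusion of \Cref{fibration}. Thus it suffices, in each case, to exhibit an integer $k \geq 2$ for which the hypotheses $c_1(L)^k \neq 0$ and $c_1(L)^{k+1} = 0$ are satisfied, and then to invoke the theorem with that $k$ and with $j = 2$.

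In case (i), $X$ has complex dimension $2$, so any $(3,3)$-form on $X$ vanishes identically for degree reasons; in particular $c_1(L)^3 = 0$. Combined with the assumption $c_1(L)^2 \neq 0$, this pins down $k = 2$. Since $c_1(L) \geq 0$ is already part of the hypothesis inherited from \Cref{fibration}, all assumptions of the theorem are satisfied, and its conclusion excludes $(m-j)$-K\"ahler structures for $j = 1, 2$, which in particular rules out the $2$-K\"ahler case.

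In case (ii), $X$ has complex dimension $3$, so $c_1(L)^4 = 0$ for dimensional reasons, and the assumption $c_1(L)^2 \neq 0$ forces the largest exponent $k$ with $c_1(L)^k \neq 0$ to lie in $\{2, 3\}$. The auxiliary hypothesis $c_1(L)^2 \geq 0$ is consistent with $c_1(L) \geq 0$ — in fact automatic from it, since the wedge product of strongly positive forms is again strongly positive — and it guarantees that $c_1(L)^2$ admits a nontrivial strongly positive $(2,2)$-representative on the base, as required by the proof of \Cref{fibration}. In either sub-case $k \geq 2$, and \Cref{fibration} applies with $j = 2 \leq k$ to exclude the existence of a $2$-K\"ahler structure on $M$.

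No serious obstacle arises: the corollary is essentially a bookkeeping exercise matching the complex dimension of the orbifold base with the degree of the relevant Chern class. The only non-formal ingredient — preservation of strong positivity under wedge products — is a standard consequence of the definition recalled in \Cref{section2}.
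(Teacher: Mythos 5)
Your treatment of case (i) is fine and is essentially the paper's argument in different packaging: on an orbisurface $c_1(L)^3=0$ for degree reasons, so $k=2$ in \Cref{fibration} and taking $j=2$ rules out $2$-K\"ahler structures. (The paper instead unwinds the same mechanism directly, integrating a hypothetical $\Omega$ against $\pi^*\alpha^2$ via integration along the fibres, but the content is the same.)

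Case (ii) is where your proposal misses the point of the statement, and you yourself flag the warning sign: under your reading the hypothesis $c_1^2(L)\ge 0$ is ``automatic'' and hence redundant. The paper's proof makes clear that in case (ii) the condition $c_1(L)\ge 0$ of \Cref{fibration} is \emph{not} taken as given --- only $\pi^*c_1(L)=0$ is retained --- and the positivity assumption is deliberately weakened to one on $c_1(L)^2$. The whole content of case (ii) is then precisely the step you omit: upgrading $c_1^2(L)\ge 0$, $c_1^2(L)\ne 0$ to semi-definiteness of $c_1(L)$ itself, so that \Cref{fibration} becomes applicable. The paper does this in two moves. First, since the fibres of $\pi$ have complex dimension one, a hypothetical $2$-K\"ahler form $\Omega$ on $M$ pushes forward to a closed transverse $(1,1)$-form on $X$, so $X$ is K\"ahler. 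Second, on a complex $3$-fold a real $(1,1)$-form whose square is positive and non-zero must be semi-definite: pointwise, if all pairwise products $\lambda_i\lambda_j$ of its eigenvalues are nonnegative, the non-zero eigenvalues share a common sign; the K\"ahler form is then what fixes a consistent global sign, so that (after possibly replacing $L$ by its dual) $c_1(L)\ge 0$. Only at that stage does \Cref{fibration} apply with $k\ge 2$ and $j=2$. As written, your argument for (ii) verifies the hypotheses of \Cref{fibration} only under an assumption the corollary does not actually make, and so proves a strictly weaker statement that is already trivially contained in \Cref{fibration}.
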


\begin{proof}
The first case follows from the fact that $\alpha\in c_1(L)$ has $\alpha^2 =\vol_X$   for a fixed volume form $\vol_X$, so $\alpha^2$ is a decomposable 4-form and $\int_X \alpha^2 \neq 0$. For a hypothetical 2-K\"ahler structure $\Omega$, using integration along the fibers, we have 
$$
0=\int_M\Omega\wedge\pi^*\alpha^2 =  \int_X\pi_*\Omega \wedge\alpha^2 = \int_X f \vol_X,
$$
with $f(p)\coloneqq\int_{\pi^{-1}(p)} \Omega$. For dimensional reasons, $\Omega$ is positive along the fibers, so $f$ is a positive function, giving a contradiction.

In the second case, assuming again the existence of a $2$-K\"ahler structure $\Omega$, since the fiber of $\pi$ {has complex dimension one}, $X$ is 1-K\"ahler, which is K\"ahler. Also the positivity of $c_1^2(L)$  in complex dimension $3$ leads to the positivity of $c_1(L)$, so the result follows from \Cref{fibration}.
\end{proof}

We note that, for a fibration over a complex  curve, the {previous}  method can not be applied.

\begin{example}
We consider $\SU(5)/T^2$ in details. The complexification of the Lie algebra of $ \SU(5)$ is $\mathfrak{sl}(m,\mathbb{C})$, for $m=5$. 
For general $m$, let $L_j\in\operatorname{GL}(m,\C)$ be the matrix with $j$-th diagonal element equal to 1 and all others being 0. 
Following \cite{FultonHarris}, the set of all roots of $\mathfrak{sl}(m,\mathbb{C})$ is $\alpha_{j,l} = e_j - e_l$, $j\neq l$, where $e_j$ are the projections to $(\mathfrak{sl}(m,\C))^*$ of the duals of $L_j$, and $e_1+\dots+e_n=0$.
A set of simple roots is $\pg{\alpha_j\coloneqq \alpha_{j,j+1}}$, which also determines the positive roots $\pg{\alpha_{j,l}, j<l}$.
The dual fundamental weights $\pg{\overline{\alpha_j}}$ are defined by
\begin{equation}\label{fundweights}
2\frac{(\overline{\alpha_j},\alpha_l)}{(\alpha_l,\alpha_l)}=\delta^i_l,
\end{equation}
where $(\cdot , \cdot )$ is the bilinear form arising from the Killing form. Then  one can check directly that the following elements satisfy \eqref{fundweights}
$$
\overline{\alpha_j}\coloneqq e_1 +...+e_j.
$$

It is known that the elements $d \, \overline{\alpha_j},  j=1,2,3,4$, where $d$ is the exterior differential operator on $\mathfrak{sl}(5,\C)$,  generate the integral cohomology ring of $\SU(5)/T^4$ and span the second de Rham cohomology group. 
Then a $T^2$-bundle $\pi\colon\SU(5)/T^2\to\SU(5)/T^4$ is defined by a pair of independent characteristic classes
\begin{equation}\label{eqbeta}
    \beta_1=\sum_j a_j \overline{\alpha_j},\quad\beta_2 = \sum_j c_j\overline{\alpha_j}
\end{equation}
with 4-tuples of integers $(a_1, \ldots ,a_4)$ and $(c_1, \ldots ,c_4)$.
 In particular,  any form $A\,d \beta_1 +C\,d\beta_2$ is exact on $\SU(5)/T^2$ for such bundle structure. We want to check when its rank is maximal.
For this, we use 
\begin{equation*}
    \begin{aligned}
d e_l &= -\sum_{m=1}^5 e_{l,m}\wedge e_{m,l},
\\
d\,\overline{\alpha_j}&=-\sum_{l=1}^j\sum_{m=1}^5e_{l,m}\wedge e_{m,l}
=-d\,\overline{\alpha_{j-1}}-\sum_{m=1}^5e_{j,m}\wedge e_{m,j},
    \end{aligned}
\end{equation*}
where $e_{j,l}$ is the dual of the elementary matrix $E_j^l$, and 
\begin{equation}\label{eqdefbeta}
    \beta\coloneqq A\,\beta_1+C\,\beta_2 = \sum_{j=1}^4 B_j\,\overline{\alpha_j},
\end{equation} 
where $B_j = A\, a_j+C\, c_j$, $j=1,2,3,4$. Now, denoting $\omega_{l,m}=e_{l,m}\wedge e_{m,l},$ and $\tilde{B}_j=B_1+\dots+B_j$, for $j=1,\dots,4$, $\tilde{B}_0=0$, we have
\begin{equation}\label{dbeta}
\begin{aligned}
d\beta &= \sum_{j=1}^4B_j \, d\,\overline{\alpha_j} \\
&=  B_1\,\omega_{1,2} + \pt{B_1+B_2}\omega_{1,3}+ \pt{B_1+B_2+B_3}\omega_{1,4}+ \pt{B_1+B_2+B_3+B_4}\omega_{1,5}\\
&\quad+B_2\,\omega_{2,3}+\pt{B_2+B_3}\omega_{2,4}+\pt{B_2+B_3+B_4}\omega_{2,5}\\
&\quad+B_3\,\omega_{3,4}+\pt{B_3+B_4}\omega_{3,5}+B_4\,\omega_{4,5}
\\
&= \sum_{j<l}\pt{\tilde{B}_{l-1}-\tilde{B}_{j-1}}\omega_{j,l}.
\end{aligned}
\end{equation}
Note that the complex structure $J$ on $\SU(5)$ acts so that $J\alpha_{j,l}=i\alpha_{j,l}$, for $j>l$, and $J\alpha_{j,l}=-i\alpha_{j,l}$, for $j<l$.
In other words, $-i\,\omega_{j,l}$, $j<l$, are positive $(1,1)$-forms, and $-i\,d\beta$ is definite if and only if the $B_j$ have the same sign and are non-zero.
This leads to the conclusion that, for such $B_j$, $\SU(5)/T^2$ does not admit a $p$-K\"ahler structure for any of the $p=2,3,...,10$. 
Of course, if some of the $B_j$ are zero, but not all of them, we will still have an obstruction to the existence of $p$-K\"ahler structures, but only for high values of $p$.
We note here that the condition on $\beta$ is open, so for an open set of left-invariant complex structures there is an obstruction for the existence of $p$-K\"ahler structures.

We can also check that $\SU(5)/T^2$ does not admit a {\it $(n-2)$-pluriclosed} structure for many choices of $\beta_1$ and $\beta_2$, where $n=11$ is the complex dimension of $\SU(5)/T^2$. 
In fact, let $J$ be the complex structure on $\SU(5)/T^2$ defined as the pullback of the one on $\SU(5)/T^4$, on the horizontal subspaces, and $J\beta_1=\beta_2$ on the vertical directions.
Since $d\beta_j$ are $(1,1)$-forms, for $j=1,2$, we have  
$$-dd^c(\beta_1\wedge J\beta_1) =(d\beta_1)^2 + (dJ\beta_1)^2 = (d\beta_1)^2+(d\beta_2)^2.$$
Now, the computations in \eqref{dbeta}, specialized for $\beta_1,\beta_2$, show that for appropriate choices of $a_j$ and $c_j$, the $(1,1)$-forms $d  \beta_1,d\beta_2$ are positive, of rank at least $4$, because every $d\,\overline{\alpha_j}$ has rank at least $4$.
For such values, $(d\beta_1)^2+(d\beta_2)^2$ is then strongly positive.
Since it is a $dd^c$-exact $(2,2)$-form, it provides an obstruction for the existence of $p$-pluriclosed structures, for $p=n-2$, and for some lower values of $p$ as well, depending on the choice of $\beta_1,\beta_2$.
\end{example}

\begin{remark} 
We remark that these observations fit to \cite[Prop. 5.1]{FGV}, stating that $\SU (5)/T^2$  has an invariant complex structure which admits both balanced and astheno-K\"ahler metrics. 
If the $T^2$-bundle structure of  $\SU (5)/T^2$ is generated $\beta_1,\beta_2$ as above, we will consider the complex structure $J$ on $\SU(5)/T^2$ defined similarly as above, as the extension of the pullback of the complex structure on  $\SU (5)/T^4$, but on the horizontal subspaces $J\beta_1=c_J\beta_2$, with $c_J$ a real positive constant. 
Then, for a  K\"ahler metric  $\omega_K$  on $\SU (5)/T^4$, the Hermitian metric $\beta_1\wedge c_J\beta_2+\pi^*\omega_K$ on $\SU(5)/T^2$ is astheno-K\"ahler if and only if 
\begin{equation}\label{aKahSU5}
    \pt{(d\beta_1)^2+c_J^2(d\beta_2)^2}\wedge\pi^*\omega_K^8=0,
\end{equation}
see \cite[Prop. 3.1]{FGV}.
We will choose $\omega_K=d\pt{\overline{\alpha_1}+\overline{\alpha_2}+\overline{\alpha_3}+\overline{\alpha_4}}$, and we will show how different choices of $\beta_1,\beta_2$, both satisfying \eqref{aKahSU5}, can give a complex structure admitting compatible balanced metrics, or not.
We recall that $\beta_1,\beta_2$ are defined in \eqref{eqbeta}, as linear combinations of the fundamental weights $\overline{\alpha_j}$, with integer coefficients.

For the first example, consider $\beta_1=\overline{\alpha_1}$, $\beta_2=\overline{\alpha_1}-\overline{\alpha_2}$, satisfying \eqref{aKahSU5}, with $c_J^2=7/4$.
Furthermore, one can choose $\beta$ as in \eqref{eqdefbeta} so that $d\beta$ is semi-positive, of rank $7$, thus obstructing the existence of $p$-K\"ahler structures, for $4\le p\le 10$, including balanced metrics.

On the other hand, if $\beta_1=\overline{\alpha_1}-3\overline{\alpha_4}$,  $\beta_2=\overline{\alpha_2}-\overline{\alpha_3}$, one can see that \eqref{aKahSU5} still holds, with $c_J^2=7/5$.
However, now no linear combination $\beta$ of $\beta_1$ and $\beta_2$ has $d\beta$ semi-definite, so the existence of balanced metrics is not obstructed.
Moreover, no such $\beta$ intersects the closure of the K\"ahler cone of $\SU (5)/T^4$, so by \cite[Thm. 5.1]{FGV}, there is a balanced metric on $\SU (5)/T^2$ compatible with this complex structure.
We conclude noting that there are some choices of $\beta$ with $(d\beta)^j$ semi-definite, for some $j>1$, for instance $\beta_1^7$ is negative semi-definite, or $\beta_2^4$ is positive semi-definite.
Ultimately, this gives obstructions to the existence of $p$-K\"ahler structures, for $p=2,3,4,7$.

\end{remark}

\section{Reductive Lie groups} \label{section4}

In this section, we investigate invariant complex structures on even-dimensional real reductive Lie groups, distinguishing between regular and non-regular types, and study the existence of special Hermitian metrics.
More precisely, we will begin by recalling a few relevant types of complex structures in this setting, and we will highlight their relation with root spaces. We will then move on to the study of existence of $p$-K\"ahler and $p$-pluriclosed structures, starting with case of rank 1 groups, then turning to the compact groups (\Cref{propcpt}) and concluding with regular complex structures on non-compact Lie groups (\Cref{propred}).

Let $G_0$ be a real Lie group of even dimension.
$G_0$ is called reductive if $\mathfrak g_0$, the Lie algebra of the connected component of the identity, is reductive, i.e. $\mathfrak g_0$ is a direct sum of an abelian ideal and a semisimple ideal.
In this setting, $\mathfrak g$ will denote the complexification of $\mathfrak g_0$, and $G$ will be a complex Lie group with Lie algebra $\mathfrak g$. 
The precise relation between $G_0$ and $G$ is defined through the respective universal coverings, see \cite[(1.1)]{Snow}.

By \cite{Snow}, every invariant complex structure $J$ on $G_0$ is determined by a choice of a {$n$-dimensional} complex subalgebra $\mathfrak q$ of $\mathfrak g$, such that $\mathfrak g=\mathfrak q+\sigma(\mathfrak q)$, where $\sigma$ is the complex conjugation in $\mathfrak g	$ with respect to $\mathfrak g	_0$.
We will denote by $Q$ the complex subgroup of $G$ corresponding to the complex subalgebra $\mathfrak q$, and we will identify the choice of $J$ with $Q$, or, equivalently, with $\mathfrak q$.

\begin{definition}
    An invariant complex structure {on $G_0$} is called \textit{exceptional}, or equivalently, $Q$ is \textit{exceptional}, if $G_0$ is a complex Lie group  and $Q$ is a reductive subgroup of $G$, which is not normal in $G$.
\end{definition}

By \cite[Theorem and Corollary in (1.6)]{Snow}, if  $G_0$ is non-abelian, connected, reductive and  $Q$ is not exceptional, then $Q$ is contained in a proper parabolic subgroup $P$ of $G$.
In particular, $G_0$ fibers holomorphically and equivariantly over an open $G_0$-orbit in $G/P$.

A special class of invariant complex structures, in which case one can say more about existence, classification, and equivalence, is that of \textit{regular} complex structures.
Let $\mathfrak h$ be a Cartan subalgebra of $\mathfrak g$.
\begin{definition}
An invariant complex structure $\mathfrak q$ is {\it regular} with respect to $\mathfrak h$, or \textit{$\mathfrak h$-regular}, if $[\mathfrak h, \mathfrak q]\subset \mathfrak q$.     
If there exists no Cartan subalgebra $\mathfrak h$ of $\mathfrak g$ such that $\mathfrak q$ is $\mathfrak h$-regular, then $\mathfrak q$ is called \textit{non-regular}.
\end{definition}
At the level of the group $G_0$, a regular complex structure is a left-invariant one, which is also right $H$-invariant for the abelian subgroup $H$ with algebra $\mathfrak h$.
If this is the case, one can choose a set of simple roots $\Delta$ such that the root space $\Sigma$ has positive roots $\Sigma^+$ and negative roots $\Sigma^-$ such that $\tau\Sigma^+=\Sigma^-$, where $\tau$ is the conjugation in $\mathfrak g$ defined by $\mathfrak g_0$.
Moreover, every choice of such a $\Delta$ induces the same complex structure, up to equivalence \cite[(2.2)]{Snow}.
If $\mathfrak q$ is a regular complex structure, then one can write 
\begin{equation*}
\mathfrak q=\pt{\mathfrak h\cap\mathfrak q}\oplus\bigoplus_{\alpha\in\Pi}\mathfrak g_\alpha,
\end{equation*}
with ${\mathfrak h\cap\mathfrak q}$ any complex subspace of $\mathfrak h$ of complex dimension equal to half the complex dimension of $\mathfrak h$, and $\Sigma=\Pi\cup\tau\Pi$, $\Pi$ determined by the Levi decomposition $\mathfrak q=\mathfrak u\oplus\mathfrak r$, with $\mathfrak u=\operatorname{rad}(\mathfrak q)$ and $\mathfrak r$ the semisimple part of $\mathfrak q$.
More precisely, associated to $\mathfrak r$ we have a root space $\Pi_{\mathfrak r}= \Pi_\mathfrak r^+\cup \Pi_\mathfrak r^-$, where $\Pi_\mathfrak r^{\pm}=\Pi_\mathfrak r\cap\Sigma^{\pm}$, and $\mathfrak q$ will have root space $\Pi_{\mathfrak u}=\Sigma^+\setminus\left(\Pi_\mathfrak r^+\cup \tau\Pi_\mathfrak r^-\right)$.
Moreover, a set of simple roots of $\mathfrak r$ $\Delta_r$ should be such that $\Delta_r$ and $\mu\Delta_r$ are \textit{separated} sets in the Dynkin diagram for $\Delta$, where $\mu=-\tau$ is an involution of $\Delta$, and actually, Snow proved that every complex structure is determined by a choice of such a $\Delta_r$ and a choice of ${\mathfrak h\cap\mathfrak q}$.
\\
By Morimoto \cite{Morimoto}, every even-dimensional, real, reductive  Lie group admits a regular complex structure, with $\mu$ the identity. 
\\
Based on the possible $\mu$'s the associated space of simple roots admits, reductive Lie groups can be distinguished in two classes.
The terminology is the same for a reductive Lie group and the associated Lie algebra, so we will give the definitions for the Lie algebras.
A real, reductive Lie algebra $\mathfrak g_0$ is called of \textit{Class I}, in the notation of Snow \cite{Snow}, or of \textit{first category}, in the notation of Sugiura \cite{Sugiura}, if its complexified simple factors are all of inner type, or equivalently if the only possible involution of any set of simple roots is the identity.
Every other real, reductive Lie algebra is called of \textit{Class II}, or of \textit{second category}.
\\
Every invariant complex structure on a reductive Lie group of Class I is
regular \cite[Theorem in (3.1)]{Snow}, and whenever $G_0$ is compact, or the semisimple part of $G$ is of rank $1$, we fall in Class I.
Let us begin considering these two cases.

\smallskip
If the semisimple part of $\mathfrak g$, $\mathfrak g_s$ is of rank $1$, then $\mathfrak g$ has to be the direct product of $\mathfrak{sl}(2,\C)$ and an abelian factor $\C^{2n-3}$, and similarly $\mathfrak g_0=\mathfrak{sl}(2,\R)\times\R^{2n-3}$.

\begin{lemma}\label{sl2product}
    Let $J$ be any complex structure on $\mathfrak g_0=\mathfrak{sl}(2,\R)\times\R^{2n-3}$.
    Then, $(\mathfrak g_0,J)$ is a direct product of $(\mathfrak{sl}(2,\R)\times\R,J_1)$ and $(\R^{2n-4},J_2)$, where $J_1,J_2$ are the restrictions of $J$ to the respective ideals of $\mathfrak g_0$.
\end{lemma}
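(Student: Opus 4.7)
The key input is the fact recalled just above the lemma: since the semisimple part of $\mathfrak g = \mathfrak{sl}(2,\C) \oplus \C^{2n-3}$ has rank $1$, $\mathfrak g_0$ belongs to Class I, and hence $J$ must be regular with respect to some Cartan subalgebra $\mathfrak h \subset \mathfrak g$. The condition $\tau \Sigma^+ = \Sigma^-$ forces $\mathfrak h = \C K \oplus \C^{2n-3}$, where $K$ spans a compact Cartan of $\mathfrak{sl}(2,\R)$; the two root spaces $\C X_{\pm\alpha}$ lie in $\mathfrak{sl}(2,\C)$ and satisfy $\sigma X_\alpha = X_{-\alpha}$. Up to replacing $\alpha$ by $-\alpha$, the $(1,0)$-subalgebra has the form $\mathfrak q = V \oplus \C X_\alpha$ with $V := \mathfrak q \cap \mathfrak h$ of complex dimension $n-1$.

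I would next set $W := V \cap \C^{2n-3}$ and use $V \oplus \sigma V = \mathfrak h$ to show that the projection $V \to \C K$ is surjective, so that $V = \C(K+v) \oplus W$ for some $v \in \C^{2n-3}$ unique modulo $W$, with $W$ of complex dimension $n-2$ and $W \cap \sigma W = 0$ (the latter because $W \cap \sigma W \subset V \cap \sigma V = 0$). Then $U_\C := W \oplus \sigma W$ is a $\sigma$-invariant complex subspace of $\C^{2n-3}$ of complex dimension $2n-4$, and $U := U_\C \cap \R^{2n-3}$ is a real $(2n-4)$-dimensional subspace. The identifications $U_\C \cap \mathfrak q = W$ and $U_\C \cap \sigma \mathfrak q = \sigma W$ give $U_\C = (U_\C \cap \mathfrak q) \oplus (U_\C \cap \sigma \mathfrak q)$, so $U$ is $J$-invariant and will play the role of the abelian factor $\R^{2n-4}$.

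The heart of the argument, and the main obstacle, is to produce a real vector $\xi \in \R^{2n-3} \setminus U$ such that $\R \xi \oplus \mathfrak{sl}(2,\R)$ is $J$-invariant; at the level of the $(1,0)$-piece this amounts to finding $\xi$ together with a non-real $c \in \C$ such that $v \equiv c\xi \pmod{W}$. Pass to the quotient $\C^{2n-3}/W$, and let $R$ denote the image of $\R^{2n-3}$ and $\sigma W$ the (injective) image of $\sigma W$. The class $\bar v$ lies neither in $R$ (otherwise $v$ could be chosen real and then $K+v \in V \cap \sigma V$ would be nonzero) nor in $\sigma W$ (otherwise the projection of $V+\sigma V$ to $\C^{2n-3}$ would be contained in the proper subspace $U_\C$). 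Since $R$ has real codimension $1$ in the real $(2n-2)$-dimensional space $\C^{2n-3}/W$, every complex line intersects $R$ in real dimension at least $1$; choosing a nonzero $\bar\xi \in \C\bar v \cap R$ gives $\bar v = c\bar\xi$ with $c \notin \R$ (otherwise $\bar v \in \R \bar\xi \subseteq R$) and $\bar\xi \notin \sigma W$ (otherwise $\bar v \in \C \bar\xi \subseteq \sigma W$). Lifting $\bar \xi$ uniquely to $\xi \in \R^{2n-3}$ yields $\xi \notin U$ and $v = c\xi + w$ for some $w \in W$.

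With such $\xi$ at hand, the rest is routine: non-reality of $c$ ensures that $K + c\xi$ and $K + \bar c\xi$ are linearly independent in $\mathfrak h$, so $\mathfrak q_1 := \C(K + c\xi) \oplus \C X_\alpha \subset \mathfrak q$ satisfies $\mathfrak q_1 \oplus \sigma \mathfrak q_1 = \C\xi \oplus \mathfrak{sl}(2,\C)$, which shows that $\R\xi \oplus \mathfrak{sl}(2,\R)$ is $J$-invariant. Since $U$ is central in $\mathfrak g_0$, the decomposition $\mathfrak g_0 = (\R\xi \oplus \mathfrak{sl}(2,\R)) \oplus U$ is a direct sum of ideals, and $(\mathfrak g_0, J)$ splits as the direct product $(\mathfrak{sl}(2,\R) \times \R, J_1) \times (\R^{2n-4}, J_2)$, with $J_1, J_2$ the corresponding restrictions of $J$.
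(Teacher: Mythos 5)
Your proposal is correct and follows essentially the same route as the paper: invoke Class I to get regularity, use $\tau\Sigma^+=\Sigma^-$ to force a compact Cartan of $\mathfrak{sl}(2,\R)$ and to see that $\mathfrak g_\alpha\oplus\sigma\mathfrak g_\alpha$ produces a $4$-dimensional $J$-invariant ideal containing $\mathfrak{sl}(2,\R)$ (your $\R\xi\oplus\mathfrak{sl}(2,\R)$ is exactly the paper's $\mathfrak g_J=\mathfrak{sl}(2,\R)+J(\mathfrak{sl}(2,\R))$). The only difference is that you explicitly construct the complementary $J$-invariant central ideal $U\subset\R^{2n-3}$, a step the paper leaves implicit, so your argument is a more detailed version of the same proof.
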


\begin{proof}
Since $\mathfrak g_0=\mathfrak{sl}(2,\R)\times\R^{2n-3}$ is of Class I, every complex structure $J$ on $\mathfrak g_0$ is regular.
Let $\mathfrak h$ be the Cartan subalgebra of $\mathfrak g=\mathfrak g_0^\C$ such that $J$ is $\mathfrak h$-regular.
It follows from the description above that complex conjugation in $\mathfrak g$ with respect to $\mathfrak g_0$ sends positive root spaces to negative root spaces, so the subspace $\mathfrak{sl}(2,\R)\cap J\pt{\mathfrak{sl}(2,\R)}$ of $\mathfrak g_0$ is non-trivial.
Thus, $\mathfrak g_J\coloneqq \mathfrak{sl}(2,\R)+J\pt{\mathfrak{sl}(2,\R)}\subset\mathfrak g_0$ has real dimension $4$, and $(\mathfrak g_J,J)$ is isomorphic to $(\mathfrak{sl}(2,\R)\times\R,J)$, concluding the proof.
\end{proof}

\begin{proposition}\label{propRk1}
    For every $n\ge 2$, $1\le p\le n-1$, and for every complex structure $J$ on $\mathfrak g_0=\mathfrak{sl}(2,\R)\times\R^{2n-3}$, the pair $(\mathfrak g_0,J)$ is $p$-pluriclosed, but not  $p$-K\"ahler.
\end{proposition}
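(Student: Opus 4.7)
The plan is to apply \Cref{sl2product} and reduce everything to the complex 2-dimensional first factor: write $(\mathfrak g_0, J)=(\mathfrak{sl}(2,\R)\times\R, J_1)\times(\R^{2n-4}, J_2)$ and identify the abelian second factor with $\C^{n-2}$ equipped with its standard flat K\"ahler form $\omega_2$. The two halves of the statement are then treated separately on this product.

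For the $p$-pluriclosed half, choose any Hermitian $(1,1)$-form $\omega_1$ on the first factor. Since $\mathfrak{sl}(2,\R)\times\R$ is unimodular and of complex dimension $2$, the form $\partial\bar\partial\omega_1$ is an exact top form and therefore vanishes by the unimodular-Lie-algebra version of the argument noted at the end of \Cref{section2}; in particular $\omega_1$ is pluriclosed. Set $\omega=\omega_1+\omega_2$. In the binomial expansion $\omega^p=\sum_k\binom{p}{k}\omega_1^k\wedge\omega_2^{p-k}$ only the indices $k\in\{0,1,2\}$ contribute, since $\omega_1^k=0$ for $k\geq 3$. Because $\omega_2$ is closed, $\partial\bar\partial$ commutes with wedging by $\omega_2^{p-k}$, and $\partial\bar\partial\omega_1^k=0$ for each such $k$ (the case $k=2$ by degree, as $(3,3)$-forms on a complex $2$-dimensional factor vanish). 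Strong positivity of $\omega^p$ is automatic as a power of a Hermitian form, so $\omega^p$ is $p$-pluriclosed.

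For the non-$p$-K\"ahler half, I first claim that $(\mathfrak{sl}(2,\R)\times\R, J_1)$ is not K\"ahler. Indeed, by Whitehead's lemmas $H^1(\mathfrak{sl}(2,\R))=H^2(\mathfrak{sl}(2,\R))=0$, so the K\"unneth formula yields $H^2(\mathfrak{sl}(2,\R)\times\R)=0$; any closed $(1,1)$-form $\omega$ is therefore exact, say $\omega=d\eta$, making $\omega^2=d(\eta\wedge\omega)$ an exact top form which must vanish on the unimodular Lie algebra, contradicting positivity. The unimodular-Lie-algebra Harvey-Lawson criterion then produces a real $1$-form $\alpha$ on the first factor whose $(1,1)$-component $(d\alpha)^{1,1}$ is a non-zero strongly positive $(1,1)$-form. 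On the product, set $\beta=\alpha\wedge\omega_2^{n-p-1}$; the hypothesis $1\le p\le n-1$ ensures $0\le n-p-1\le n-2$. Since $\omega_2$ is closed, $d\beta=d\alpha\wedge\omega_2^{n-p-1}$, and its $(n-p,n-p)$-component $(d\alpha)^{1,1}\wedge\omega_2^{n-p-1}$ is a non-zero strongly positive $(n-p,n-p)$-form on $\mathfrak g_0$. If $\Omega$ were a $p$-K\"ahler form, then by bidegree $\Omega\wedge d\beta$ reduces to $\Omega\wedge(d\alpha)^{1,1}\wedge\omega_2^{n-p-1}$, a strictly positive multiple of the volume form by transversality of $\Omega$; however $d\Omega=0$ forces $\Omega\wedge d\beta=d(\Omega\wedge\alpha\wedge\omega_2^{n-p-1})$, an exact top form on the unimodular $\mathfrak g_0$, which must vanish. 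This contradiction rules out any $p$-K\"ahler structure.

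The main obstacle I anticipate lies in the appeal to the unimodular-Lie-algebra Harvey-Lawson criterion to extract the obstruction form $\alpha$ from the non-K\"ahlerness of the first factor. A more constructive alternative would be to use Snow's classification of regular complex structures on the Class I rank $1$ algebra $\mathfrak{sl}(2,\R)\times\R$ and verify positivity of $(d\alpha)^{1,1}$ in an adapted dual basis $h,x,y,z$, case by case with respect to the choice of complex line $\mathfrak h\cap\mathfrak q$ in the Cartan.
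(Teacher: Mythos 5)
Your argument takes a genuinely different route from the paper's, which proceeds by induction on $n$: the base case is $\mathfrak{sl}(2,\R)\times\R$ (Gauduchon by unimodularity, non-K\"ahler), and the inductive step quotients $\mathfrak g_0$ by a $J$-invariant central ideal inside the abelian factor and invokes Alessandrini's result \cite[Prop. 4.3]{Ales} for the resulting holomorphic submersion. Your direct construction of the $p$-pluriclosed form is complete and correct: $\partial\bar\partial\omega_1=d\pt{\bar\partial\omega_1}$ is an exact top-degree form on the unimodular four-dimensional factor, hence zero, and the binomial expansion of $(\omega_1+\omega_2)^p$ together with $d\omega_2=0$ and $\omega_1^3=0$ disposes of the remaining terms. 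This half is, if anything, more self-contained than the paper's.

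The non-K\"ahler half, however, has a genuine gap exactly where you flag it. Producing a real $1$-form $\alpha$ on $\mathfrak{sl}(2,\R)\times\R$ with $(d\alpha)^{1,1}$ non-zero and strongly positive from the mere non-existence of a K\"ahler form is the \emph{converse} (duality) direction of the Harvey--Lawson criterion; the paper only records that the easy Stokes direction (a strongly positive exact form obstructs $p$-K\"ahler structures) adapts to unimodular Lie algebras, and you do not prove the converse. It is true: a finite-dimensional separation of the open convex cone of transverse $(1,1)$-forms from the subspace of closed ones, combined with the identification of the annihilator of $\ker\pt{d|_{\Lambda^{1,1}}}$ with the image of $\pi^{1,1}\circ d$ (integration by parts is legitimate precisely because of unimodularity), yields such an $\alpha$ — but this must be written out. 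Alternatively, and more in the spirit of Section \ref{section4}, you can bypass the duality entirely: every complex structure here is regular with respect to a Cartan subalgebra meeting $\mathfrak{sl}(2,\R)$ in its compact Cartan (the split Cartan is excluded since $\sigma$ would fix the root spaces), and the $1$-form $\xi$ dual to $H_\alpha=[E_\alpha,E_{-\alpha}]$ has $d\xi$ equal to a non-zero real multiple of $\omega^\alpha\wedge\omega^{-\alpha}$, a rank-one semi-definite exact $(1,1)$-form; choosing the sign of $\xi$ gives the required $\alpha$ explicitly, exactly as in the proof of \Cref{propcpt}. With either repair, the rest of your obstruction argument — wedging with $\omega_2^{n-p-1}$, the bidegree reduction, and Stokes on the unimodular $\mathfrak g_0$ — goes through for all $1\le p\le n-1$.
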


\begin{proof}
First note that $\mathfrak{sl}(2,\R)\times\R$ is Gauduchon, because it is unimodular, but it is not K\"ahler.
This proves the case $n=2$.

For $n\ge2$ we argue by induction.
As a consequence of \Cref{sl2product}, $\mathfrak g_0$ admits a $J$-invariant, central ideal $\mathfrak a$, of real dimension $2$, where $\mathfrak a$ is any $J$-invariant subspace of $(\R^{2n-4},J)$.
This induces a holomorphic submersion of $(\mathfrak g_0,J)$ onto the quotient $\tilde{\mathfrak g}=\mathfrak g_0/\mathfrak a\simeq\mathfrak g_J\times\R^{2n-6}$.
Now, we can use the induction assumption on $\tilde{\mathfrak g}$, together with \cite[Prop. 4.3]{Ales}, proving the statement, for $2\le p<n$.
The case $p=1$ holds true as $(\mathfrak g_0,J)$ is clearly non-K\"ahler, and it is pluriclosed because product of a pluriclosed Lie algebra by a K\"ahler one.
\end{proof}

\begin{remark}\label{rmkbalanced}
    In \cite{GiustiPodesta}, it was proved that every even-dimensional, non-compact, real, simple Lie group $G_0$ of inner type, endowed with an invariant complex structure, admits a balanced metric.
    A straightforward consequence of \Cref{propRk1} is that this cannot be generalized to even-dimensional, non-compact, real, \textit{reductive} Lie group, not even Class I, which is the most natural generalization of inner type simple groups, in the reductive case.
\end{remark}

\smallskip
We will now consider the compact case.
We firstly recall that by \Cref{fibration,thmhomog}, provided the cohomological assumptions in the former, a compact, reductive Lie group of even real dimension cannot be $(n-k)$-K\"ahler, for any $1\le k\le r_0$, where $r_0$ is the complex dimension of the flag manifold, base of the Tits fibrations. In other words, $r_0$ is the number of positive roots of the associated complexified Lie algebra.
We will now show that, using the general structure equations for reductive Lie groups in \cite{AlekPer}, we can prove the same, removing the cohomological assumptions.

\begin{theorem}\label{propcpt}
    Let $G_0$ be a compact, non-abelian,  real reductive Lie group of even dimension $2n$, endowed with an invariant complex structure.
    In the same notation as above, let $r_0$ be the number of positive roots of the complexified Lie algebra $\mathfrak g$.
    Then, $G_0$ cannot be $(n-k)$-K\"ahler, for any $1\le k\le r_0$.
\end{theorem}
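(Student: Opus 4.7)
The plan is to reduce to the Lie algebra level, construct an exact, strongly positive, left-invariant $(1,1)$-form $\omega$ whose powers through order $r_0$ provide a Stokes obstruction, and conclude by contradiction. Since $G_0$ is compact, any candidate $(n-k)$-K\"ahler form may be averaged to a left-invariant one (closedness and transversality are preserved by this averaging, as they hold for each $L_g^*\Omega$), so the question becomes purely algebraic on $\mathfrak g_0$. By Snow's classification recalled above, a compact even-dimensional reductive Lie algebra is of Class I, so the given $J$ is regular: there is a Cartan subalgebra $\mathfrak h\subset\mathfrak g$ together with positive roots $\Sigma^+$, with $|\Sigma^+|=r_0$, such that $\mathfrak q=(\mathfrak h\cap\mathfrak q)\oplus\bigoplus_{\alpha\in\Sigma^+}\mathfrak g_\alpha$, and the conjugation associated with the compact real form swaps $\Sigma^+$ and $\Sigma^-$. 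In particular, each $-i\,e_\alpha\wedge e_{-\alpha}$, where $e_\alpha$ is the left-invariant $(1,0)$-form dual to a root vector in $\mathfrak g_\alpha$, is a decomposable positive $(1,1)$-form.

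Imitating the $\SU(5)/T^2$ computation performed earlier in the paper and invoking the general structure equations of \cite{AlekPer}, I would then pick a strictly dominant element $\beta\in\mathfrak h^*$ -- for example any strictly positive integer combination of the fundamental weights $\overline{\alpha_j}$ -- extend it by zero to a left-invariant $1$-form on $G_0$, and read off from the Maurer--Cartan equation the identity
$$
\omega \coloneqq -i\,d\beta \;=\; \sum_{\alpha\in\Sigma^+} c_\alpha\,(-i)\,e_\alpha\wedge e_{-\alpha},\qquad c_\alpha>0,
$$
since the coefficients are controlled by the pairings $\beta(H_\alpha)$, which are strictly positive for $\beta$ dominant and $\alpha\in\Sigma^+$. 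Thus $\omega$ is strongly positive and of maximal complex rank $r_0$ on the horizontal distribution of the Tits fibration $G_0\to G_0/T$. For every $1\le k\le r_0$ the power
$$
\omega^k \;=\; (-i)^k\, d\!\left(\beta\wedge (d\beta)^{k-1}\right)
$$
is therefore exact, non-zero (since $\mathrm{rk}\,\omega=r_0\ge k$), and a positive linear combination of decomposable $(k,k)$-forms, hence strongly positive.

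The contradiction will follow from a Stokes argument that uses the unimodularity of $\mathfrak g_0$. If an $(n-k)$-K\"ahler structure $\Omega$ existed, it could be taken left-invariant and closed, so
$$
\int_{G_0}\Omega\wedge\omega^k \;=\; (-i)^k\int_{G_0} d\!\left(\Omega\wedge\beta\wedge(d\beta)^{k-1}\right) \;=\; 0.
$$
On the other hand, by transversality of $\Omega$ paired with the non-zero strongly positive form $\omega^k$, the integrand is a non-negative multiple of the volume form; by left-invariance this multiple is constant and strictly positive, forcing the integral to be strictly positive -- the sought contradiction.

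The most delicate point will be the verification, for the chosen dominant $\beta$, that all coefficients $c_\alpha$ in the expansion of $d\beta$ are strictly positive, which is what simultaneously delivers the strong positivity and the maximal rank $r_0$ of $\omega$. In the semisimple case this is a Maurer--Cartan root-space computation that extends verbatim the explicit $\SU(5)$ expansion already displayed in Section~\ref{section3}. The extension to reductive $\mathfrak g_0$ with a non-trivial central summand is immediate, since central directions contribute no roots and therefore do not enter $d\beta$, in accordance with the fact that $r_0$ records only the positive roots of the semisimple factor.
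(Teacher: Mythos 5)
Your proposal is correct and follows essentially the same route as the paper: both use regularity of the complex structure (Class I), the structure equation $d\beta=\sum_{\alpha\in\Sigma^+}\beta(H_\alpha)\,\omega^\alpha\wedge\omega^{-\alpha}$ from \cite{AlekPer} to produce an exact strongly positive $(1,1)$-form, and the Stokes obstruction for its powers up to the rank. The only difference is that you take $\beta$ strictly dominant whereas the paper takes the highest weight $\xi$ (for which only $\xi(H_\alpha)\ge 0$ is immediate); your choice makes the claim that the form has full rank $r_0$ slightly more transparent.
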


\begin{proof}
Having proved \Cref{propRk1}, we can assume $\operatorname{rank}(\mathfrak g_s)\geq 2$.
If $G_0$ is compact, then it is of Class I, thus every invariant complex structure is regular.
Moreover, we can fix a basis $\pg{E_\alpha}_{\alpha\in\Sigma}$ of root vectors such that $\tau E_\alpha=- E_{-\alpha}$.
By \cite[(2.1)]{AlekPer}, if $\xi$ is the highest weight of $\mathfrak g$, then 
$$
d\xi=\sum_{\beta\in\Sigma^+}\xi(H_\beta)\,\omega^\beta\wedge \omega^{-\beta}
$$
where $H_\beta=[E_\beta,E_{-\beta}]$ and $\pg{\omega^\alpha }_{\alpha\in\Sigma}$ is a dual basis of $\pg{E_\alpha}_{\alpha\in\Sigma}$.
Since $\xi$ is the highest weight, $\xi H_\alpha\ge0$, for all $\alpha\in\Sigma^+$, so that $d\xi$ is positive, as $\omega^{-\beta}=\overline{\omega^\beta}$.
Moreover, $d\xi$ has rank equal to the number of positive roots, $r_0=\abs{\Sigma^+}\geq \operatorname{rank}(\mathfrak g)\geq 2$.
As a consequence, for $1\leq k\leq r_0$, $d\xi$ is a non-zero, strongly positive, exact form, obstructing the existence of $(n-k)$-K\"ahler forms on $\mathfrak g_0$.
\end{proof}

For regular complex structures in the non-compact case, we can prove the following.

\begin{theorem}\label{propred}
Let $G_0$ be a non-abelian, real reductive Lie group of even dimension $2n$, with $\operatorname{rank}(\mathfrak g_s)\geq 2$, endowed with an invariant regular complex structure $J$.
Then, $(G_0,J)$ cannot be $(n-2)$-pluriclosed.
\end{theorem}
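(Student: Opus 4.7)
The plan is to exhibit a nontrivial, strongly positive, $\partial\bar\partial$-exact real $(2,2)$-form $\Omega$ on $\mathfrak g_0$. Since $\mathfrak g_0$ is unimodular, such an $\Omega$ obstructs the existence of any $(n-2)$-pluriclosed structure on $(G_0,J)$, by the integration-by-parts argument recalled at the end of \Cref{section2}. The construction combines the highest-weight computation used in \Cref{propcpt} with the $dd^c$-identity exploited in the $\SU(5)/T^2$ example.

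First I would use the regularity of $J$ to fix a $\tau$-stable Cartan $\mathfrak h\subset\mathfrak g$ together with a compatible set of simple roots $\Delta$ satisfying $\tau(\Sigma^+)=\Sigma^-$. Since $\operatorname{rank}(\mathfrak g_s)\geq 2$, I would choose an element $\xi\in\mathfrak h^*$ in the interior of the dominant Weyl chamber of the semisimple part, symmetrized under the Dynkin involution $\mu$ induced by $\tau$ when $\mathfrak g_0$ is of Class II, so that $\xi(H_\beta)>0$ for every $\beta\in\Sigma^+$. Extending $\xi$ trivially on the root spaces to a complex 1-form on $\mathfrak g$ and invoking the Chevalley--Eilenberg formula used in the proof of \Cref{propcpt} yields
\[
d\xi \;=\; \sum_{\beta\in\Sigma^+}\xi(H_\beta)\,\omega^\beta\wedge\omega^{-\beta},
\]
which is of type $(1,1)$ by regularity and of rank at least $|\Sigma^+|\geq 2$. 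Writing $\xi=\eta_1+i\eta_2$ with $\eta_j$ real, a computation parallel to the one in the $\SU(5)/T^2$ example establishes the identity
\[
\pm\, dd^c(\eta_1\wedge J\eta_1) \;=\; (d\eta_1)^2+(dJ\eta_1)^2,
\]
exhibiting $(d\eta_1)^2+(dJ\eta_1)^2$ as a $\partial\bar\partial$-exact $(2,2)$-form. Provided $d\eta_1$ and $dJ\eta_1$ are semi-positive real $(1,1)$-forms with at least one of rank $\geq 2$, each square is strongly positive and their sum is nontrivial, yielding the required obstruction.

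The main obstacle is verifying the positivity of the building blocks $d\eta_1$ and $dJ\eta_1$. Normalizing the Chevalley basis so that $\tau E_\alpha=-E_{-\mu\alpha}$, one obtains $\overline{\omega^\alpha}=-\omega^{-\mu\alpha}$; in Class I (where $\mu=\mathrm{id}$) every summand $\omega^\beta\wedge\omega^{-\beta}$ is a positive $(1,1)$-form, so the analysis reduces to the template of \Cref{propcpt}. In Class II, on the other hand, $\mu$-orbits of size two contribute a priori indefinite Hermitian blocks $-\bigl(\omega^\beta\wedge\overline{\omega^{\mu\beta}}+\omega^{\mu\beta}\wedge\overline{\omega^\beta}\bigr)$ to $d\xi$; exploiting the $\mu$-invariance of $\xi$ and tracking how these blocks recombine through the real/imaginary split $\xi=\eta_1+i\eta_2$, one has to show that the indefinite cross-terms cancel upon forming $(d\eta_1)^2+(dJ\eta_1)^2$, leaving only the positive diagonal contributions. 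This combinatorial check on $\mu$-orbits is the technical heart of the argument.
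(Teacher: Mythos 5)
Your overall strategy --- exhibit a non-zero, strongly positive, $\partial\bar\partial$-exact $(2,2)$-form and conclude by the Stokes/unimodularity argument recalled at the end of \Cref{section2} --- is the right one, and it is also the paper's. But the specific form you propose does not work, and the difficulty you flag at the end is not a technicality: it is where the argument breaks, and it already breaks in Class I. For a non-compact real form, the conjugation with respect to $\mathfrak g_0$ sends $E_\beta$ to $\pm E_{-\beta}$ with a sign depending on whether $\beta$ is a compact or a non-compact root, so in
\[
d\xi=\sum_{\beta\in\Sigma^+}\xi(H_\beta)\,\omega^\beta\wedge\omega^{-\beta}
\]
the summands $\omega^\beta\wedge\omega^{-\beta}=\mp\,\omega^\beta\wedge\overline{\omega^\beta}$ carry both signs, and $d\xi$ is indefinite no matter how dominant $\xi$ is; your claim that Class I ``reduces to the template of \Cref{propcpt}'' is therefore false off the compact case. (This is forced: by \cite{GiustiPodesta} and \Cref{rmkbalanced}, non-compact inner-type simple groups carry balanced metrics, so they admit no non-zero positive semi-definite exact invariant $(1,1)$-form at all.) Squares of indefinite real $(1,1)$-forms are not weakly positive --- for instance $\bigl(i\,\omega^1\wedge\overline{\omega^1}-i\,\omega^2\wedge\overline{\omega^2}\bigr)^2$ is a negative multiple of a decomposable form --- so neither $(d\eta_1)^2$ nor $(dJ\eta_1)^2$ is strongly positive, and the cross-term cancellation you defer to ``a combinatorial check on $\mu$-orbits'' has no reason to yield a positive sum. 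That check is the proof, and it is not done.

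The paper avoids the issue by never asking any exact $(1,1)$-form to be positive. It takes a symmetric $J$-invariant bilinear form $h$ supported only on the Cartan subalgebra, i.e.\ with $h(E_\alpha,\cdot)=0$ for all roots $\alpha$, sets $\omega=h(J\cdot,\cdot)$, and computes, following \cite{LauMon} and \cite{GiustiPodesta}, that the only non-vanishing components of $dd^c\omega$ are $dd^c\omega(E_\alpha,E_{-\alpha},E_\beta,E_{-\beta})=-2h(H_\alpha,H_\beta)$: the degeneracy of $h$ on the root spaces kills precisely the terms depending on the signs $\varepsilon_\alpha$ that you are struggling with. Positivity of the resulting $\partial\bar\partial$-exact $(2,2)$-form is then arranged by hand through the choice of $h$ on the Cartan, which is possible since $\operatorname{rank}(\mathfrak g_s)\ge2$. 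To salvage your approach you would need to replace $d\xi$ by such a directly controlled object; as written, the construction fails at its first building block.
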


\begin{proof}
In the same notation as above, we can define the complex constants $\varepsilon_\alpha$, $\alpha\in\Sigma$, such that $J E_\alpha=\varepsilon_\alpha E_\alpha$.
More precisely,
\begin{equation*}
    \varepsilon_\alpha=\begin{cases}
        i,   &   \alpha\in\Pi,    \\
        -i,  &   \alpha\in\tau\Pi,    \\
    \end{cases}
\end{equation*}
so that $\varepsilon_\alpha\varepsilon_{-\alpha}=1$, for all $\alpha\in\Sigma$.
Let $h$ be a symmetric, $J$-invariant, bilinear form on $\mathfrak g$, such that $h(E_\alpha,\cdot)=0$, for all $\alpha\in\Sigma$. 
We can then define a $(1,1)$-form $\omega$, with $\omega(\cdot,\cdot)=h(J\cdot,\cdot)$, and following the proofs of \cite[Lemma 2.2, Lemma 3.1]{LauMon} and \cite[Section 4]{GiustiPodesta}, one can find 
that the only non-zero components of $dd^c\omega$ are
\begin{equation}\label{eqddc}
    dd^c\omega\pt{E_\alpha,E_{-\alpha},E_\beta,E_{-\beta}}=-2\,h(H_\alpha,H_\beta),
\end{equation}
for all $\alpha,\beta\in\Sigma$, with $\alpha+\beta\neq0$.
This is because, in the references cited above, the same computation is performed when $\omega$ is the $(1,1)$-form associated to some Hermitian metric $g$ on $\mathfrak g$, but the only properties used in the proofs are that $g(E_\alpha,E_\beta)=0$ if $\alpha+\beta\neq0$, $g(\mathfrak h,E_\alpha)=0$, for all $\alpha\in\Sigma$, and that $J E_\alpha=\tilde\varepsilon_\alpha E_\alpha$, with 
\begin{equation*}
    \tilde\varepsilon_\alpha=\begin{cases}
        i,   &   \alpha\in\Sigma^+,    \\
        -i,  &   \alpha\in\Sigma^-.    \\
    \end{cases}
\end{equation*}
However, with our choice of $h$, we can see that the components depending on the $\varepsilon_\alpha$ all vanish, because $h(E_\alpha,\cdot)=0$, allowing to recover \eqref{eqddc}.

As a consequence, if $\operatorname{rank}(\mathfrak g_{s})\ge2$, we find an obstruction to the existence of $(n-2)$-pluriclosed metrics, choosing for instance $h\pt{H_{\alpha_0},H_{\beta_0} }=h\pt{JH_{\alpha_0},JH_{\beta_0} }=1 $, for some $\alpha_0,\beta_0\in\Sigma$ with $\alpha_0+\beta_0\neq0$, and $h(H_\gamma,\cdot)=0$, for $\gamma\in\Sigma\setminus\pg{\alpha_0,\beta_0}$ and $H_\gamma\neq JH_{\alpha_0},JH_{\beta_0}$.
\end{proof}

\begin{remark}
We note that, whenever $\operatorname{rank}(\mathfrak g_s)\geq 4$, the bilinear form $h$ in the proof can be chosen to have more non-degenerate directions, and thus used to obstruct existence of $p$-pluriclosed structures, for lower, even values of $p$.
\end{remark}

\section{Complex structures on \texorpdfstring{$\mathfrak{sl}(2m-1,\R)$}{sl(2m-1,R)}}\label{section5}

This section is dedicated to the construction of non-regular complex structures on $\mathfrak{sl}(2m-1,\R)$, for all $m\ge2$.
Before getting into the details of this construction, let us recall some standard notations and properties of the structure of semisimple Lie algebras, for which we refer to \cite{Knapp}.
Let $\mathfrak g_0$ be a real semisimple Lie algebra, with a Cartan decomposition $\mathfrak g_0=\mathfrak k_0\oplus\mathfrak p_0$ and associated Cartan involution $\theta=\operatorname{id}_{\mathfrak k_0}\oplus\pt{-\operatorname{id}_{\mathfrak p_0}}$.
Let $\mathfrak h_0=\mathfrak t_0\oplus\mathfrak a_0$ be a maximally compact, $\theta$-stable Cartan subalgebra of $\mathfrak g_0$, with $\mathfrak t_0\subset\mathfrak k_0$ and $\mathfrak a_0\subset\mathfrak p_0$.
If $\mathfrak g\coloneqq\mathfrak g_0^\C$ is the complexification of $\mathfrak g_0$, then $\mathfrak h=\mathfrak h_0^\C$ is a Cartan subalgebra of $\mathfrak g$, and $\mathfrak u_0=\mathfrak k_0\oplus i\mathfrak p_0$ is a compact real form of $\mathfrak g$. 
We will denote with $\sigma$ the conjugation in $\mathfrak g$ with respect to $\mathfrak g_0$, and with $\tau$ conjugation with respect to $\mathfrak u_0$. 
A simple computation shows 
\begin{equation}\label{conjInvolution}
	\tau=\sigma\theta=\theta\sigma.
\end{equation}
In what follows, we will always denote with $\Sigma=\Sigma^+\cup\Sigma^-$ a root system for $\mathfrak h$, with positive roots $\Sigma^+$, negative roots $\Sigma^-$ and simple roots $\Delta$.
We note that there cannot be real roots, and that the lexicographic ordering defining $\Sigma^+$ can be chosen with respect to an ordered basis $\mathcal B=\pt{\mathcal B\cap i\mathfrak t_0}\cup \pt{\mathcal B\cap \mathfrak a_0}$, where ${\mathcal B\cap i\mathfrak t_0}$ is a basis of $i\mathfrak t_0$, and $\mathcal B\cap \mathfrak a_0$ is a basis of $\mathfrak a_0$.
This will ensure on the one hand that $\rest{-\tau}{\Sigma^+}=\operatorname{id}_{\Sigma^+}$, and on the other hand that $\theta\Sigma^+=\Sigma^+$, thus $\theta$ is a permutation of the simple roots, with only fixed points the imaginary roots.
The \textit{Vogan diagram} of the triple $\pt{\mathfrak g_0,\mathfrak h_0,\Sigma^+}$ encodes this permutation of $\Delta$, and determines $\mathfrak g_0$, up to isomorphism.

\subsection{Non-regular complex structures on \texorpdfstring{$\mathfrak{sl}(2m-1,\R)$}{sl(2m-1,R)}}

We will now focus on the case $\mathfrak g_0=\mathfrak{sl}(2m-1,\R)$, where $\mathfrak g=\mathfrak g_0^\C=\mathfrak{sl}(2m-1,\C)$.
After specializing the above discussion to this setting, we will construct a new example of a complex structure on $\mathfrak g_0$ (\Cref{lemmaCpxStr}) and ultimately prove that this complex structure is non-regular (\Cref{thmnonreg}).

\smallskip
The ordering of the root system can be chosen so that, given simple roots $\Delta=\pg{\alpha_1,\dots,\alpha_{2m-2}}$, the positive roots can be described as 
\begin{equation*}
	\Sigma^+\coloneqq\pg{\alpha_j^k=\alpha_j+\alpha_{j+1}+\dots+\alpha_{j+k-1},\,1\le k\le 2m-2,\, 1\le j\le 2m-1-k}.
\end{equation*}
In other words, the root $\alpha_j^k$ is the sum of $k$ consecutive simple roots, starting from $\alpha_j$.
For such an ordering of roots, the Cartan matrix is the standard one, i.e.
\begin{equation*}
	\begin{pmatrix}
		2 & -1 & \\
		-1 & \ddots & \ddots \\
		&\ddots & \ddots & -1\\
		&&-1&2
	\end{pmatrix}.
\end{equation*}

Looking at the Vogan diagram of $\mathfrak{sl}(2m-1,\R)$, cf. \cite[pag. 358]{Knapp}, we see that we can assume the Cartan involution to be $\theta(\alpha_j)=\alpha_{2m-1-j}$, and no simple root is purely imaginary, because $\theta$ has no fixed points in $\Delta$.
Furthermore, by the discussion above it follows that $\sigma\pt{\alpha_j}=-\alpha_{2m-1-j}$, and by linearity $\sigma\pt{\alpha_j^k}=-\alpha_{2m-k-j}^k$.
Among positive roots, the only ones preserved by $-\sigma$ are 
\begin{equation*}
	\gamma_j=\alpha_j^{2(m-j)},
\end{equation*}
$1\le j\le m-1$.

We denote by  $\mathfrak g_\alpha$ the $\alpha$--eigenspace for $\operatorname{ad}(\mathfrak h)$, if $\alpha\in\Sigma$, and set $\mathfrak g_\beta=\pg0$, for $\beta\in\mathfrak h^*\setminus\Sigma$.
We can choose generators $e_\alpha$ of $\mathfrak g_\alpha$, $\alpha\in\Sigma^+$, so that 	$\pq{e_\alpha,e_{\beta}}=e_{\alpha+\beta}$, for all $\alpha,\beta\in\Sigma^+$ such that $\alpha+\beta\in\Sigma^+$, with $\alpha<\beta$ in lexicographic order.
Let $B$ be the Killing form on $\mathfrak g$, and for every $\alpha\in\Sigma$, let $H_\alpha\in\mathfrak h$ be the $B$-dual of $\alpha$, namely the vector defined by $B(H,H_\alpha)=\alpha(H)$, for all $H\in\mathfrak h$.

Then, for all $\alpha_j^k\in\Sigma^+$, $\sigma(\ep{2m-k-j}k)
\in\mathfrak g_{\sigma\alpha_{2m-k-j}^k}=\mathfrak g _{-\alpha_j^k}$ an

d
\begin{equation*}
\pq{\ep jk,\sigma\pt{\ep{2m-k-j}k}}=B\pt{\ep jk,\sigma\pt{\ep{2m-k-j}k}}H_{\alpha	_j^k}.
\end{equation*}
For the sake of simplicity, we will denote 
\begin{equation}\label{defhjk}
\begin{aligned}
		&H_j^k=H_{\alpha_j^k},\quad\quad&& B_j^k=B_{\alpha_j^k}=B\pt{e_{\alpha_j^k},\sigma\pt{\ep{2m-k-j}k}},\\
		&H_j=H_j^1,&& B_j=B_j^1,
\end{aligned}\end{equation}
for $\alpha_j^k\in\Sigma^+$.
Note that $\sigma\pt{B_j^k}=B_{2m-k-j}^k$, so in particular $B_j^{2(m-j)}=B_{\gamma_j}\in\R$.
Furthermore, a simple computation shows that, for all $\alpha,\beta,\alpha+\beta\in\Sigma^+$,
\begin{equation}\label{killingsum}
	B_{\alpha+\beta}=B_\alpha B_\beta B\pt{H_\alpha,H_\beta}.
\end{equation}

\bigskip
To prove the existence of a non-regular complex structure, we will generalize complex structure II in \cite{sasaki} to higher dimension.
Before getting into the details of the proof, we explain the idea behind the construction and fix some notation.
Let 
\begin{equation*}
\begin{aligned}
		& \tilde H_k=H_k-2{H_{2m-1-k}}=H_k+\sigma\pt{H_{k}}
		,	&& 		1\le k \le m-3,	\\
		& \tilde H_{m-2}=2\,H_{m-2}+H_{m-1},	\\
		& \tilde H_{m-1}=H_{m-1}+2\,H_{m}=H_{m-1}-2\,\sigma(H_{m-1}).
\end{aligned}
\end{equation*} 
As one can see from the Cartan matrix, $\alpha_{m-1}\pt{\tilde H_k}=0$, for all $1\le k\le m-1$.
Equivalently, denoting  $\tilde{\mathcal H}=\pg{\tilde H_k,1\le k\le m-1}$ and $\tilde{\mathfrak h}=\operatorname{span}_\C{\tilde{\mathcal H}}$,  $\rest{\alpha_{m-1}}{\tilde{\mathfrak h}}=0$.
Moreover,  $\tilde{\mathfrak h}+\sigma{\tilde{\mathfrak h}}=\mathfrak h$, so the complex subalgebra $\mathfrak m$ of $\mathfrak g$ generated by $\tilde{\mathcal H}$ and by all the positive root spaces is a $\mathfrak h$-regular complex structure on $\mathfrak g_0$, see \cite{Morimoto}.
The idea is to start from this complex structure, and to construct the new one replacing $\ep{m-1}{}$ with  $e_0\coloneqq\ep{m-1}{}+\sigma(\ep{m}{})$. 
We will use the notation
$$\hat\Sigma^+\coloneqq\Sigma^+\setminus\pg{\alpha_{m-1}}.$$
This  defines the new complex structure on $\mathfrak g_0$, as explained in the following Lemma.

\begin{lemma}\label{lemmaCpxStr}
	The complex subspace $\mathfrak q$ of $\mathfrak g =\mathfrak{sl}(2m-1,\C)$ generated by
\begin{equation*}
	\mathcal B=
	\tilde{\mathcal H}
	\cup
	\pg{e_\alpha,\,\alpha\in\hat\Sigma^+}	\cup
	\pg{e_0}
\end{equation*}
is a complex subalgebra of $\mathfrak g$, such that $\mathfrak g=\mathfrak q\oplus\sigma(\mathfrak q)$.
In other words, $\mathfrak q$ defines a complex structure $J_{\mathfrak q}$  on $\mathfrak g_0 = \mathfrak{sl}(2m-1,\R)$.
\end{lemma}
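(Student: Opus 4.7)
The plan is to verify two things: (i) $\mathfrak{q}$ is closed under the Lie bracket, and (ii) $\mathfrak{q}\cap\sigma(\mathfrak{q})=0$. A direct count gives $|\mathcal B|=(m-1)+(|\Sigma^+|-1)+1=2m(m-1)=\tfrac12\dim_\C\mathfrak g$, so (ii) combined with this dimension count forces $\mathfrak g=\mathfrak q\oplus\sigma(\mathfrak q)$, and the lemma follows.

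For (i), I would run through the basis brackets. The ones not involving $e_0$ are routine: $[\tilde H_k,\tilde H_l]=0$, $[\tilde H_k,e_\alpha]=\alpha(\tilde H_k)\,e_\alpha\in\mathfrak q$, and $[e_\alpha,e_\beta]$ with $\alpha,\beta\in\hat\Sigma^+$ lands in $\mathfrak g_{\alpha+\beta}$, where $\alpha+\beta$ (when a root) is positive of height at least $2$, hence not equal to the simple root $\alpha_{m-1}$, so $\alpha+\beta\in\hat\Sigma^+$. For brackets involving $e_0=e_{\alpha_{m-1}}+\sigma(e_{\alpha_m})$, the relation $\sigma(\alpha_m)=-\alpha_{m-1}$ shows that both summands are $\operatorname{ad}(\mathfrak h)$-eigenvectors with weights $\pm\alpha_{m-1}$, and since $\alpha_{m-1}$ vanishes on $\tilde{\mathfrak h}$ by construction of $\tilde{\mathcal H}$, we get $[\tilde H_k,e_0]=0$. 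The delicate case is $[e_\alpha,e_0]=[e_\alpha,e_{\alpha_{m-1}}]+[e_\alpha,\sigma(e_{\alpha_m})]$ for $\alpha\in\hat\Sigma^+$, whose summands live in $\mathfrak g_{\alpha+\alpha_{m-1}}$ and $\mathfrak g_{\alpha-\alpha_{m-1}}$ respectively. The first either vanishes or lands in $\hat\Sigma^+$; for the second, writing $\alpha=\alpha_j^k$ in the $A_{2m-2}$ coordinates, $\alpha-\alpha_{m-1}$ is a root only when $j=m-1$ with $k\ge 2$ (yielding $\alpha_m^{k-1}$) or when $j+k=m$ with $j<m-1$ (yielding $\alpha_j^{m-1-j}$), and in both cases the result is a positive root distinct from $\alpha_{m-1}$, hence in $\hat\Sigma^+$.

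For (ii), given $v\in\mathfrak q\cap\sigma(\mathfrak q)$, I would project onto the decomposition $\mathfrak g=\mathfrak h\oplus\bigoplus_{\gamma\in\Sigma}\mathfrak g_\gamma$. The $\mathfrak g_{\alpha_{m-1}}$-component of $v$ expressed in $\mathfrak q$ equals the coefficient of $e_0$ times $e_{\alpha_{m-1}}$, whereas the same component expressed in $\sigma(\mathfrak q)$ vanishes (since $\sigma(\hat\Sigma^+)\subset\Sigma^-\setminus\{-\alpha_m\}$ and the only positive-root contribution of $\sigma(e_0)$ is to $\mathfrak g_{\alpha_m}$). This kills the $e_0$-coefficient, and the symmetric argument in $\mathfrak g_{-\alpha_m}$ kills the $\sigma(e_0)$-coefficient. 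After removing these, the positive-root components of $v$ come only from $\mathfrak q$ and the negative-root ones only from $\sigma(\mathfrak q)$, so all remaining root-space coefficients vanish independently. The Cartan component then lies in $\tilde{\mathfrak h}\cap\sigma(\tilde{\mathfrak h})$, which is zero because $\tilde{\mathfrak h}+\sigma(\tilde{\mathfrak h})=\mathfrak h$ and $\dim_\C\tilde{\mathfrak h}=m-1=\tfrac12\dim_\C\mathfrak h$.

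The hard part is the $A$-type root-combinatorics in the $[e_\alpha,\sigma(e_{\alpha_m})]$ analysis: had $\alpha-\alpha_{m-1}$ ever been a \emph{negative} root for some $\alpha\in\hat\Sigma^+$, the bracket would escape $\mathfrak q$ with nothing available to cancel it, and the subalgebra property would fail. The whole reason for replacing $e_{\alpha_{m-1}}$ with $e_0=e_{\alpha_{m-1}}+\sigma(e_{\alpha_m})$ is that the $A_{2m-2}$ geometry, together with the removal of $\alpha_{m-1}$ from $\hat\Sigma^+$, ensures that every such difference is either a non-root or a positive root in $\hat\Sigma^+$; once that combinatorial check is in place, the remainder of the argument is essentially root-space bookkeeping.
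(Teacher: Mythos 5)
Your proposal is correct and follows essentially the same route as the paper: closure under brackets is checked on the generating set, with the key point being that $\alpha\pm\alpha_{m-1}$, when a root, is a positive root different from $\alpha_{m-1}$ for $\alpha\in\hat\Sigma^+$ (the paper deduces this from the general fact that a positive root minus a simple root is positive or not a root, while you enumerate the $A_{2m-2}$ cases explicitly). The only difference is that you spell out the verification of $\mathfrak q\cap\sigma(\mathfrak q)=0$ and the dimension count, which the paper simply asserts to follow from the construction of $\mathcal B$; your argument there is sound.
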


\begin{proof}
	The last part of the statement follows from the construction of ${\mathcal B}$, so we only need to show that $\mathfrak q$ is closed under brackets.
	We start noting that, for all $\alpha,\alpha'\in\hat\Sigma^+$, $\pq{e_\alpha,e_{\alpha'}}\subset\mathfrak g_{\alpha+\alpha'}$, where $\mathfrak g_{\alpha+\alpha'}$ is intended to be the null space if $\alpha+\alpha'$ is not a root.
Being $\alpha_{m-1}$ a simple root, this proves that the vector space spanned by $	\pg{e_\alpha,\,\alpha\in\hat\Sigma^+}$ is a subalgebra.
It follows the subspace $\tilde{\mathfrak q}$ of $\mathfrak q$ spanned by $\mathcal B\setminus\pg{e_0}$ is a subalgebra, because $e_\alpha$ is an $\operatorname{ad}(\mathfrak h)$-eigenvector, for all $\alpha\in\Sigma$. 
It remains to prove that $\operatorname{ad}(e_0)$ preserves  ${\mathfrak q}$.
For all $H\in\mathfrak h$, we have $[H,e_0]=\alpha_{m-1}(H)(\ep{m-1}{}-\sigma\pt{\ep m{}})$. 
As noted above, $\alpha_{m-1}$ is null on $\tilde{\mathcal H}$, so $\operatorname{ad}(e_0)$ vanishes on $\tilde{\mathcal H}$.
Moreover, for $\alpha\in\hat\Sigma^+$, $\pq{e_0,e_\alpha}\in \mathfrak g_{\alpha+\alpha_{m-1}}+\mathfrak g_{\alpha-\alpha_{m-1}}$.
Now, since $\alpha$ is a positive root and $\alpha_{m-1}$ is a simple root, $\pq{e_0,e_\alpha}$ is in the space spanned by the positive root vectors.
Furthermore, $\alpha+\alpha_{m-1}$ cannot be $\alpha_{m-1}$ and $\alpha-\alpha_{m-1}\neq\alpha_{m-1}$, for otherwise $2\alpha_{m-1}$ would be a root. 
This yields that $\pq{e_0,e_\alpha}\in\operatorname{span}_\C\pg{e_\alpha,\,\alpha\in\hat\Sigma^+}$, thus concluding the proof.
\end{proof}

We are now ready to prove the following.

\begin{theorem}\label{thmnonreg}
	The real Lie algebra $\mathfrak{sl}(2m-1,\R)$ admits a non-regular complex structure, for all $m\ge2$.
\end{theorem}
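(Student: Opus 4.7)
The plan is to rule out $\mathfrak h'$-regularity of $J_{\mathfrak q}$ by computing the nilradical $\operatorname{nil}(\mathfrak q)$ intrinsically and comparing its dimension with the one forced on any regular solvable complex structure by the structural description recalled at the beginning of \Cref{section4}. A direct inspection of the brackets in the proof of \Cref{lemmaCpxStr} gives $[\mathfrak q,\mathfrak q]\subset\mathfrak n := \operatorname{span}_{\C}\{e_\alpha : \alpha\in\hat\Sigma^+\}$; since $\mathfrak n$ is a subalgebra of the upper nilpotent $\bigoplus_{\alpha\in\Sigma^+}\mathfrak g_\alpha$, it is nilpotent, $\mathfrak q$ is solvable, and $\mathfrak n$ is a nilpotent ideal of $\mathfrak q$ of complex dimension $|\hat\Sigma^+|=|\Sigma^+|-1$.

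The core step is to prove $\operatorname{nil}(\mathfrak q)=\mathfrak n$. Any strictly larger nilpotent ideal contains some $v=h+c\,e_0$ with $h\in\tilde{\mathfrak h}$, $c\in\C$ and $(h,c)\neq 0$, and Engel's theorem forces $\operatorname{ad}_{\mathfrak q} v|_{\mathfrak n}$ to be nilpotent. Now $h$ is semisimple (lying in $\mathfrak h$), $e_0$ is a semisimple element of the $\mathfrak{sl}_2$-subalgebra spanned by $e_{\pm\alpha_{m-1}}$ and $H_{\alpha_{m-1}}$ (the coefficient of $e_{-\alpha_{m-1}}$ in $\sigma(e_{\alpha_m})$ being nonzero), and $[h,e_0]=0$ because $\alpha_{m-1}$ vanishes on $\tilde{\mathfrak h}$; hence $v$ is semisimple in $\mathfrak g$, and $\operatorname{ad} v|_{\mathfrak n}$ is simultaneously semisimple and nilpotent, thus zero. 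A case analysis of $[e_0,e_\alpha]$ along the $\alpha_{m-1}$-string through $\alpha$---length one when $\alpha\perp\alpha_{m-1}$, length two otherwise with $[e_0,e_\alpha]$ a nonzero multiple of $e_{\alpha\pm\alpha_{m-1}}$---then translates $\operatorname{ad} v|_{\mathfrak n}=0$ into $c=0$ (from every length-two string) together with $\alpha(h)=0$ for every $\alpha\in\hat\Sigma^+$. Since $\Sigma^+$ spans $\mathfrak h^*$ and $\alpha_{m-1}=(\alpha_{m-1}+\alpha_m)-\alpha_m$ lies in the $\C$-span of $\hat\Sigma^+$, the set $\hat\Sigma^+$ itself spans $\mathfrak h^*$, so $h=0$ and $v=0$, a contradiction; hence $\operatorname{nil}(\mathfrak q)=\mathfrak n$.

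Finally, if $J_{\mathfrak q}$ were $\mathfrak h'$-regular for some Cartan $\mathfrak h'\subset\mathfrak g$, then since $\mathfrak q$ is solvable the structural description from \Cref{section4} forces $\mathfrak q=(\mathfrak h'\cap\mathfrak q)\oplus\bigoplus_{\alpha\in\Pi}\mathfrak g'_\alpha$ with $\dim_{\C}(\mathfrak h'\cap\mathfrak q)=m-1$ and $|\Pi|=|\Sigma^+|$; an entirely analogous Engel-based argument (using that the positive roots in $\Pi$ span $(\mathfrak h')^*$) identifies the nilradical with $\bigoplus_{\alpha\in\Pi}\mathfrak g'_\alpha$, of dimension $|\Sigma^+|=(2m-1)(m-1)$. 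The mismatch with the computed value $|\Sigma^+|-1$ yields the required contradiction, and $J_{\mathfrak q}$ is non-regular.

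The main obstacle is establishing $\operatorname{nil}(\mathfrak q)=\mathfrak n$: the length-two $\alpha_{m-1}$-strings, on which $\operatorname{ad} e_0$ acts as a nonzero semisimple $2\times 2$ block swapping $e_\alpha$ and $e_{\alpha-\alpha_{m-1}}$, are precisely what rules out adjoining any multiple of $e_0$ to $\mathfrak n$, and together with $\hat\Sigma^+$ spanning $\mathfrak h^*$ this produces the one-dimensional deficit in $\operatorname{nil}(\mathfrak q)$ that ultimately obstructs regularity.
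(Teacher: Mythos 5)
Your proof is correct, but it follows a genuinely different route from the paper's. The paper argues directly on normalizers: it expands a general $W=W_{\mathfrak h}+W_\Sigma+W_0\in\mathfrak q$, computes the brackets $[W,\sigma(e_{\alpha_{2m-1-j}})]$ root by root, and shows that $[\sigma(W),\mathfrak q]\subset\mathfrak q$ forces $W\in\mathfrak h$; hence any Cartan subalgebra $\mathfrak h_r$ witnessing regularity would have to coincide with $\mathfrak h$, and one checks $[H_{\alpha_{m-1}},e_0]\notin\mathfrak q$. You instead compute an isomorphism invariant of the abstract Lie algebra $\mathfrak q$ --- the dimension of its nilradical --- and show it equals $\abs{\Sigma^+}-1$, one less than the value forced on any solvable $\mathfrak h'$-regular structure, whose nilradical contains $\bigoplus_{\alpha\in\Pi}\mathfrak g'_\alpha$ with $\abs{\Pi}=\dim_\C\mathfrak q-(m-1)=\abs{\Sigma^+}$. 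Your route buys conceptual clarity and reusability (a single numerical obstruction, applicable to any solvable $\mathfrak q$ with too small a nilradical) at the price of more structure theory (semisimplicity of $e_0$ via the $\mathfrak{sl}_2$-triple, compatibility of Jordan decompositions, the half-dimensionality of $\mathfrak h'\cap\mathfrak q$); the paper's route is more computational but pins down exactly which Cartan subalgebras could witness regularity. Two points in your write-up should be made explicit. First, a nilpotent ideal strictly containing $\mathfrak n$ contains an element of the form $h+c\,e_0$ only modulo $\mathfrak n$; this suffices because $\mathfrak n\subset\operatorname{nil}(\mathfrak q)$, but say so. Second, in the final step you need both that $\Pi\cap(-\Pi)=\emptyset$ (this follows from solvability of $\mathfrak q$, and without it $\bigoplus_{\alpha\in\Pi}\mathfrak g'_\alpha$ need not be a nilpotent ideal, since $[\mathfrak g'_\alpha,\mathfrak g'_{-\alpha}]\subset\mathfrak h'$) and that $\dim_\C(\mathfrak h'\cap\mathfrak q)=m-1$ exactly, which uses the condition $\mathfrak h'=(\mathfrak h'\cap\mathfrak q)+\sigma(\mathfrak h'\cap\mathfrak q)$ in the definition of regularity: the a priori bound $\dim_\C(\mathfrak h'\cap\mathfrak q)\le 2m-2$ would only give $\abs{\Pi}\ge 2(m-1)^2=\abs{\Sigma^+}-(m-1)$, which does not contradict $\dim\operatorname{nil}(\mathfrak q)=\abs{\Sigma^+}-1$.
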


\begin{proof}
We will show that the complex structure $\mathfrak q$ constructed in \Cref{lemmaCpxStr} is non-regular, namely that $\mathfrak q$ is not $\mathfrak h_r$-regular, for any $\mathfrak h_r$ Cartan subalgebra of $\mathfrak g$.
We recall that being $\mathfrak h_r$-regular means that $\mathfrak q$ is $\operatorname{ad}(\mathfrak h_r)$-invariant and $\mathfrak h_r=\pt{\mathfrak h_r\cap\mathfrak q}+\sigma\pt{\mathfrak h_r\cap\mathfrak q}$. 
First note that $\mathfrak q$ is not $\mathfrak h$-regular, as $H_{\alpha_{m-1}}\in\mathfrak h$, but $[H_{\alpha_{m-1}},e_0]=\ep{m-1}{}-\sigma\pt{\ep m{}}\notin\mathfrak q$.

We will show that, if some $W\in\mathfrak q$ is such that $\pq{\sigma (W),\mathfrak q}\subset\mathfrak q$, then $W$ is in $\mathfrak h$. 
Let us fix such a $W=W_\mathfrak h+W_\Sigma+W_0$, where
\begin{equation}\label{defW}
	W_\mathfrak h =\sum_{k=1}^{m-1}c_k\tilde H_k\in\tilde{\mathfrak h} ,\quad 
	W_{\Sigma}=\sum_{\alpha\in\hat\Sigma^+}c_\alpha\ep{}{},
	\quad W_0=c_0\,e_0,
\end{equation}
for some $c_k,c_\alpha\in\C$, $0\le k\le m-1,\alpha\in\hat\Sigma^+$.
The property $[\sigma(W), \mathfrak q]\subset \mathfrak q$ is  equivalent to $[W, \sigma(\mathfrak q)]\subset \sigma(\mathfrak q)$.
Now, $\alpha_m\in\hat\Sigma^+$, so $\ep m{}\in\mathfrak q$, and 
\begin{equation*}
\begin{aligned}	
	\sigma\pt{\mathfrak q}\ni\pq{W,\sigma\pt{\ep m{}}}
	&=\pq{W_\mathfrak h	,\sigma\pt{\ep m{}}}+\sum_{\alpha\in\hat\Sigma^+}c_\alpha\pq{{\ep{}{}},\sigma\pt{\ep m{}}}+c_0\pq{e_0,\sigma\pt{\ep m{}}}\\
	&=\sum_{\alpha,\alpha-\alpha_{m-1}\in\hat\Sigma^+}c_\alpha\,\varepsilon_\alpha\, e_{\alpha-\alpha_{m-1}} +c_0\,B_{m-1}H_{m-1},
\end{aligned}
\end{equation*}
with $\varepsilon_\alpha\in\C$, where we used that $\pq{W_\mathfrak h,\sigma\pt{\ep m{}}}=-\alpha_{m-1}\pt{W_\mathfrak h	}\sigma\pt{\ep m{}}= 0$.
We recall that $\sigma\pt{H_{{m-1}}}=\sigma\pt{H_{\alpha_{m-1}}}=H_{-\alpha_{m}}=-H_{\alpha_{m}}\notin\mathfrak q$, and $B_{m-1}\neq0$, so  $c_0=0$.

We can use the same argument considering $\ep {2m-1-j}{}\in\mathfrak q$, for all $j\neq m$, to get
\begin{equation}\label{eqsigmaq}
\begin{aligned}	
	\adws{\ep{2m-1-j}{}}&
	= -\alpha_j\pt{W_\mathfrak h	}\sigma\pt{\ep{2m-1-j}{}}+\sum_{\alpha\in\hat\Sigma^+}c_\alpha \pq{\ep{}{},\sigma\pt{\ep{2m-1-j}{}}}\\
	&=-\alpha_j\pt{W_\mathfrak h	}\emp j{}+\sum_{\alpha\in\Sigma_j}c_\alpha\,\epsilon_\alpha \,e_{\alpha-\alpha_j}+c_{\alpha_j}\,B_{j}H_j,
\end{aligned}
\end{equation}
where  $\epsilon_\alpha\in\C$ and $\Sigma_j=\pg{\alpha\in\hat\Sigma^+\text{ s.t. }\alpha-\alpha_j \in\hat\Sigma^+}\subset \hat\Sigma^+\setminus\pg{\alpha_j}$.
Focusing on the last summand, we can argue, as above, that $H_{j}$ is not in $\sigma\pt{\mathfrak q}$, so $c_{\alpha_j}=0$, for all $1\le j\le 2m-2$ with $j\neq m$.
As for the second sum, recall that if $\alpha-\alpha_j\in\Sigma$, then $\alpha-\alpha_j$ is actually a positive root, because $\alpha$ is positive, and $\alpha_j$ simple.
It follows that the second summand cannot be in $\sigma\pt{\mathfrak q}$, unless zero, because $\mathfrak q$ does not contain negative roots.
Thus, for every $1\le j\le 2m-2$ with $j\neq m$, for every $\alpha\in\hat\Sigma^+$ such that $\alpha-\alpha_j\in\Sigma$, we must have $c_\alpha=0$.
We claim that, since $j$ is arbitrary, this implies $c_{\alpha_l^k}=0$, for all $2\le k\le 2m-2$, $1\le l\le 2m-1-k$, ultimately proving that $c_\alpha=0$, for all $\alpha\in\hat\Sigma^+$, as 
\begin{equation*}
	\hat\Sigma^+=\pg{\alpha_j,\,1\le j\le 2m-2,\,j\neq m}\cup\pg{\alpha_l^k,\,2\le k\le 2m-2,\,1\le l\le 2m-1-k}.
\end{equation*}
To prove the claim, we recall that $\alpha_l^k=\alpha_l+\dots+\alpha_{l+k-1}$, and we distinguish in two cases, the first one being when $l\neq m$, and the second one $l=m$.
In the first case, we pick $j=l$ and we note that since $k\ge2$, $\alpha_l^k-\alpha_l=\alpha_{l+1}+\dots+\alpha_{l+k-1}\in\Sigma^+$. 
On the other hand, if $l=m$, then $l+k-1=m-1+k> m$, where we used once again $k\ge2$. Thus, we can choose $j=m-1+k$ to get $\alpha_m^k-\alpha_{m-1+k}=\alpha_{m}+\dots+\alpha_{m+k-2}\in\Sigma^+$, proving the claim. 

To sum up, we showed that for every $W=W_\mathfrak h+W_\Sigma+W_0\in\mathfrak h\cap\mathfrak q$, defined as in \eqref{defW}, such that $\mathfrak q$ is $\operatorname{ad}_{\sigma(W)}$-invariant, then $W_0=W_\Sigma=0$, or equivalently $W\in\mathfrak h$. 
In other words, if $\mathfrak q$ is regular, with respect to some Cartan subalgebra $\mathfrak h_r$, then $\mathfrak h_r\subseteq\mathfrak h$ and so these two spaces are the same.
This is a contradiction, as we already noted that $\mathfrak q$ is not $\mathfrak h$-regular.
\end{proof}

We will now discuss the existence of special Hermitian metrics for this class of complex structures.

\begin{theorem}\label{thmSL}
The complex structure $J_{\mathfrak q}$ on $\mathfrak{sl}(2m-1,\mathbb{R})$, as defined in \Cref{lemmaCpxStr}, admits compatible balanced metrics.
	\end{theorem}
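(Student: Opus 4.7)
The plan is to construct an explicit $J_{\mathfrak{q}}$-invariant Hermitian metric on $\mathfrak{g}_0 = \mathfrak{sl}(2m-1,\R)$ whose fundamental $(1,1)$-form $\omega$ is balanced, i.e.\ satisfies $d\omega^{n-1}=0$ with $n=\dim_{\C} \mathfrak{g}_0 = 2m(m-1)$. Real and imaginary parts of $\mathcal{B} \cup \sigma(\mathcal{B})$ provide a real basis of $\mathfrak{g}_0$: for each $v \in \mathcal{B}$ the plane spanned by $v+\sigma(v)$ and $i(v-\sigma(v))$ is $J_{\mathfrak{q}}$-invariant. A natural Ansatz is to declare the $(1,0)$-coframe $\{\omega^v\}_{v\in\mathcal{B}}$ dual to $\mathcal{B}$ orthogonal with positive weights $c_v$, so that
\[
\omega \;=\; i\sum_{v\in\mathcal{B}} c_v\,\omega^v\wedge\overline{\omega^v},
\]
allowing if necessary an off-diagonal correction $c_{v,w}\,\omega^v\wedge\overline{\omega^w}$ on the coupled pair $(v,w)=(e_0,\ep{m-1}2)$ to absorb the twist.

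I would next compute $d\omega^v$ for each $v\in\mathcal{B}$ via the Chevalley--Eilenberg differential, using $[e_\alpha,e_\beta]=e_{\alpha+\beta}$ when $\alpha+\beta\in\Sigma^+$, the Killing pairing $[\ep jk,\sigma(\ep{2m-k-j}k)]=B_j^k H_j^k$ from \eqref{defhjk}, and the bracket expansions involving $e_0=\ep{m-1}{}+\sigma(\ep m{})$. A direct check gives, for instance, $[e_0,\sigma(e_0)] = \ep{m-1}2 - \sigma(\ep{m-1}2)$, so the twist produces no Cartan contribution at this order, only off-diagonal root-space terms. Balancedness of $\omega$ is equivalent to vanishing of the Lee $1$-form $\theta^L$ defined by $d\omega^{n-1}=\theta^L\wedge\omega^{n-1}$, which on the unimodular Lie algebra $\mathfrak{g}_0$ amounts to a finite linear system in the metric parameters $c_v$ (and $c_{v,w}$). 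The contributions from paired root spaces $\mathfrak{g}_{\pm\alpha}$ with $\alpha\in\hat\Sigma^+$ cancel automatically by semisimplicity ($\tr\operatorname{ad}_X=0$), so the regular part of the computation reproduces the standard argument for $\mathfrak{h}$-regular complex structures à la Snow.

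The heart of the argument is solving the residual linear system produced by the non-regular twist. Brackets of the form $[e_0,e_\alpha]$ and $[e_0,\sigma(e_\alpha)]$, for $\alpha\in\hat\Sigma^+$ containing $\alpha_{m-1}$ or $\alpha_m$, couple the weights attached to the roots $\alpha_j^k$ meeting $\alpha_{m-1}$ or $\alpha_m$, giving constraints absent from the regular case. The main obstacle is to show that these couplings are mutually consistent and can be satisfied with strictly positive weights. This should follow from the $\sigma$-symmetries $\sigma(\alpha_j^k)=-\alpha_{2m-k-j}^k$ and $\sigma(B_j^k)=B_{2m-k-j}^k$ of the $A_{2m-2}$ root system, which force the twist-induced constraints to appear in $\sigma$-paired form and hence to be compatible, while the freedom in the diagonal weights is sufficient to preserve positivity. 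Once such $c_v$ are fixed, $\omega$ is a positive, $J_{\mathfrak{q}}$-invariant $(1,1)$-form with $d\omega^{n-1}=0$, yielding the desired compatible balanced metric.
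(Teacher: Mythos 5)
Your overall strategy---reduce balancedness of an invariant metric to a finite linear condition on the metric coefficients and then solve it---is the same one the paper uses, via the criterion that a Hermitian metric on a unimodular Lie algebra is balanced if and only if some (equivalently, any) unitary basis $\pg{v_j}$ of $\mathfrak q=\mathfrak g^{1,0}$ satisfies $\sum_j\pq{v_j,\sigma(v_j)}=0$. But your ansatz is too restrictive, and the two claims you use to dispose of the computation are exactly where the argument breaks. First, the vanishing of $\pq{e_\alpha,\sigma(e_\alpha)}$ for most $\alpha\in\hat\Sigma^+$ is not a consequence of unimodularity: $\tr\operatorname{ad}_X=0$ does not force $\sum_j\pq{v_j,\sigma(v_j)}$ to vanish (compact semisimple groups are unimodular and are nevertheless not balanced, cf.\ \Cref{propcpt}); it holds here because $\alpha+\sigma(\alpha)$ is neither $0$ nor a root except when $\alpha=\gamma_j=\alpha_j^{2(m-j)}$. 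Second, and decisively, for $\alpha=\gamma_j$ one has $\pq{e_{\gamma_j},\sigma(e_{\gamma_j})}=B_{\gamma_j}H_{\gamma_j}$ with $B_{\gamma_j}\in\R\setminus\{0\}$, and the $m-1$ vectors $H_{\gamma_j}$ are linearly independent in $\mathfrak h$. With a diagonal metric the balanced condition therefore requires cancelling $\sum_j c_{\gamma_j}^{-1}B_{\gamma_j}H_{\gamma_j}$, while the single off-diagonal correction you allow, between $e_0$ and $e_{\gamma_{m-1}}$, only contributes cross-terms lying in root spaces (in $\mathfrak g_{\pm\alpha_m}$ and $\mathfrak g_{\alpha_{m-1}}$), never in $\mathfrak h$. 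Hence no choice of positive weights $c_v$, with or without that correction, solves your linear system: within your ansatz the system is inconsistent, and the appeal to ``$\sigma$-paired, hence compatible'' constraints does not repair this.

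The missing idea is that the Cartan contributions $B_{\gamma_j}H_{\gamma_j}$ must be cancelled by deliberately coupling, for each $j$, the two root vectors $e_{\alpha_j^{m-j}}$ and $e_{\alpha_m^{m-j}}$ whose roots sum to $\gamma_j$: the cross-bracket of $e_{\alpha_j^{m-j}}-\lambda_j e_{\alpha_m^{m-j}}$ with its $\sigma$-conjugate lands in $\mathfrak h$ and, by the identity $B_{\alpha+\beta}=B_\alpha B_\beta B(H_\alpha,H_\beta)$ of \eqref{killingsum}, exactly offsets $B_{\gamma_j}H_{\gamma_j}$ for a suitable real $\lambda_j$. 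Likewise the root-space term $e_{\gamma_{m-1}}-\sigma(e_{\gamma_{m-1}})$ produced by $\pq{e_0,\sigma(e_0)}$ (your computation of this bracket is correct) is cancelled by coupling $e_{\gamma_{m-1}}$ with the Cartan direction $\tilde H_{m-1}$, and $e_0$ with $e_{\alpha_m}$---not $e_0$ with $e_{\gamma_{m-1}}$. This is precisely what the paper does: it replaces $\mathcal B$ by a corrected basis $\tilde{\mathcal B}$ and declares that basis unitary, which in your language amounts to a Hermitian metric with several specific off-diagonal entries that your ansatz excludes.
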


\begin{proof}
	We recall that by \cite[Lemma 1]{AV}, a Hermitian metric on a $2n$-dimensional  unimodular real  Lie algebra $(\mathfrak{g}_0, J)$ endowed with a complex structure $J$ is balanced if and only if there is a basis $\pg{v_1,\dots,v_n}$ of $\mathfrak g_0^{1,0}$ such that 
\begin{equation}\label{balancedn}
    \sum_{j=1}^n\pq{v_j,\overline{v_j}}=0,
\end{equation}
and in our particular case, complex conjugation is $\sigma$, so to prove the existence of a balanced metric compatible with $J_{\mathfrak q}$ we need to find $X_1,\dots,X_n$ generating $\mathfrak q$ satisfying 
\begin{equation*}
    \sum_{j=1}^n\pq{X_j,\sigma\pt{X_j}}=0,
\end{equation*}
with $n=2m(m-1)=\dim_\C\mathfrak q$.
We start noting that the only $X\in\mathcal B$ with $\pq{X,\sigma(X)}\neq0$ are $e_0$ and $e_{\gamma_j}$, where we recall that $\gamma_j=\alpha_j^{2(m-j)}$, $j=1,\dots,m-1$, are the only positive roots preserved by $-\sigma$.
We define $f_\alpha$, $\alpha\in\Sigma^+$, as
\begin{equation*}
\begin{aligned}
	&f_{\alpha_{m-1}}\coloneqq e_0-B_{m-1}\, {e_{\alpha_m}} =\ep{m-1}{}+\sigma\pt{e_{\alpha_m}}-B_{m-1}\, {e_{\alpha_m}},	\\
	&f_{\gamma_{m-1}}\coloneqq e_{\gamma_{m-1}} +\frac{\tilde{H}_{m-1}}{3}=e_{\gamma_{m-1}} +\frac{{H}_{m-1}+2H_m}{3},	\\
	&f_{\alpha_j^{m-j}}\coloneqq e_{\alpha_j^{m-j}}-B_j^{m-j}B\pt{H_j^{m-j},H_m^{m-j}}e_{\alpha_m^{m-j}},
		&& 1\le j<m-1,\\
	&f_\alpha=e_\alpha,
		&&\alpha\neq\gamma_{m-1},\alpha_j^{m-j},\,j\le  m-1.
\end{aligned}
\end{equation*}
We note that $\gamma_j=\alpha_j^{m-j}+\alpha_m^{m-j}$, and $B\pt{H_j^{m-j},H_m^{m-j}}\in\R$, for $1\le j\le m-1$. 
This construction ensures that
\begin{equation*}
	\tilde{\mathcal B}=	\tilde{\mathcal H}\,\cup
	\pg{f_\alpha,\,\alpha\in\Sigma^+}
\end{equation*}
is a basis of $\mathfrak q$. 
Now, the only  $X\in\tilde{\mathcal B}$ with $\pq{X,\sigma(X)}\neq0$ are $f_{\gamma_j}$, $f_{\alpha_j^{m-j}}$, $1\le j\le m-1$.
One can easily compute, for $j=m-1$,
\begin{equation*}
\begin{aligned}
		&\pq{f_{\alpha_{m-1}},\sigma\pt{f_{\alpha_{m-1}}}}={e_{\gamma_{m-1}}-\sigma\pt{e_{\gamma_{m-1}}}}-B_{m-1}B_m\,H_{\gamma_{m-1}},\\
		&\pq{f_{\gamma_{m-1}},\sigma\pt{f_{\gamma_{m-1}}}}=B_{\gamma_{m-1}}H_{\gamma_{m-1}}-{e_{\gamma_{m-1}}+\sigma\pt{e_{\gamma_{m-1}}}},
\end{aligned}
\end{equation*}
where we used that $B_m=\sigma\pt{B_{m-1}}$. 
By \eqref{killingsum}, we get
\begin{equation*}
	\pq{f_{\alpha_{m-1}},\sigma\pt{f_{\alpha_{m-1}}}}+\pq{f_{\gamma_{m-1}},\sigma\pt{f_{\gamma_{m-1}}}}=0.
\end{equation*}
Similarly, for $j<m-1$, using that $\sigma\pt{B_j^{m-j}}=B_m^{m-j}$, 
\begin{equation*}
\begin{aligned}
		&\pq{f_{\alpha_j^{m-j}},\sigma\pt{f_{\alpha_j^{m-j}}}}=-B_j^{m-j}B_m^{m-j}B\pt{H_j^{m-j},H_m^{m-j}}\,(H_j^{m-j}+H_m^{m-j}),\\
		&\pq{f_{\gamma_{j}},\sigma\pt{f_{\gamma_{j}}}}=B_{\gamma_{j}}H_{\gamma_{m-1}}.
\end{aligned}
\end{equation*}
Once again, by \eqref{killingsum}, the sum of the two lines vanish, ultimately proving that the Hermitian metric with unitary basis $\tilde{\mathcal B}$ is balanced.
\end{proof}

\begin{example}\label{sl3}
    For $m=3$, we have $\mathfrak g_0=\mathfrak{sl}(3,\R)$, and the construction above gives the complex structure denoted in \cite{sasaki,AGT} by II. In this case, we can write explicitly the complex structure equations, in terms of a basis  $\pg{\alpha^0,\dots,\alpha^4}$ dual to   $\mathcal B$ :
\begin{equation*}
    \begin{cases}
        d\alpha^0=\frac13\pt{\alpha^{1\bar2} -2\alpha^{2\bar1}-\alpha^{3\bar3}},\\
        d\alpha^1=-3\alpha^{1\bar0}-\alpha^{3\bar1},\\
        d\alpha^2=-3\alpha^{02}-\alpha^{13}-6\alpha^{0\bar1}-\alpha^{1\bar3}+\alpha^{3\bar2},\\
        d\alpha^3=-3\alpha^{03}-\alpha^{12}-\alpha^{1\bar1}-3\alpha^{3\bar0}.
    \end{cases}
\end{equation*}
In particular,
\begin{equation*}
    \begin{aligned}
       & \partial\bar\partial\pt{\alpha^{1\bar1}}= 3\alpha^{01\bar0\bar1}+\frac13\alpha^{13\bar1\bar3},\\
       & \partial\bar\partial\pt{\alpha^{0\bar01\bar1}}= 3\alpha^{013\bar0\bar1\bar3},
    \end{aligned}
\end{equation*}
proving that $\pt{\mathfrak{sl}(3,\R),J}$ cannot admit $p$-pluriclosed structures, for $p=1,2$.
As a consequence, it does not admit pluriclosed nor astheno-K\"ahler metrics.
\end{example}

\begin{corollary}\label{slnSKT}
    The pair $(\mathfrak{sl}(2m-1,\R), J_{\mathfrak q})$, where $J_{\mathfrak q}$ is the complex structure from \Cref{lemmaCpxStr}, cannot  {admit  compatible pluriclosed metrics}.
\end{corollary}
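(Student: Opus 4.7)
The strategy is to construct an invariant strongly positive $\partial\bar\partial$-exact $(n-1,n-1)$-form on $(\mathfrak{sl}(2m-1,\R), J_{\mathfrak q})$, where $n=2m(m-1)$; by the Harvey–Lawson-type criterion recalled in \Cref{section2}, applied on a compact quotient $\Gamma\backslash\SL(2m-1,\R)$ (available via Borel), such a form obstructs any compatible invariant pluriclosed metric.

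For $m=2$ the obstruction is already given in \Cref{sl3}: $\partial\bar\partial(\alpha^{0\bar01\bar1})=3\alpha^{013\bar0\bar1\bar3}$ is a nonzero positive multiple of a decomposable strongly positive $(n-1,n-1)$-form. For $m\ge 3$ I would replicate this. Working in the $\mathcal B$-dual coframe $\{\alpha^0\}\cup\{\alpha^\alpha:\alpha\in\hat\Sigma^+\}\cup\{\alpha^k\}$ of $\mathfrak q^*$, I would first compute $d\alpha^0$ from $e_0=\ep{m-1}{}+\sigma(\ep m{})$: since $\alpha^0$ coincides with the $\mathfrak g_{\alpha_{m-1}}$-projection on $\mathfrak g$, its differential detects those brackets producing a nontrivial $\mathfrak g_{\alpha_{m-1}}$-component—these come from ladder moves $[e_{\alpha_{m-1}+\beta},\sigma(e_\beta)]$ and from Cartan brackets $[H,e_0]$ with $\alpha_{m-1}(H)\neq 0$. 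A short calculation should then show that $\partial\bar\partial(\alpha^{0\bar0})$ carries a strongly positive $(2,2)$-summand, generalizing $\partial\bar\partial(\alpha^{1\bar1})=3\alpha^{01\bar0\bar1}+\frac{1}{3}\alpha^{13\bar1\bar3}$ from \Cref{sl3}. Finally, wedging with a carefully chosen strongly positive $(n-3,n-3)$-form $\Psi=\bigwedge_{j\in S}(i\,\alpha^j\wedge\overline{\alpha^j})$ whose factors are $\partial$- and $\bar\partial$-closed would reduce Leibniz to $\partial\bar\partial(\alpha^{0\bar0}\wedge\Psi)=\partial\bar\partial(\alpha^{0\bar0})\wedge\Psi$, yielding the required exact strongly positive $(n-1,n-1)$-form.

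The principal obstacle is the combinatorial selection of $\Psi$ and the verification that no cancellations occur in the wedge product. The Vogan-diagram symmetry $\theta(\alpha_j)=\alpha_{2m-1-j}$ and the distinguished $-\sigma$-fixed positive roots $\gamma_j=\alpha_j^{2(m-j)}$ should govern which indices can be placed in $S$; I expect $\Psi$ can be chosen so that $\partial\bar\partial(\alpha^{0\bar0})\wedge\Psi$ is a positive multiple of a single decomposable $(n-1,n-1)$-form, directly generalizing $\alpha^{013\bar0\bar1\bar3}$ in the $m=2$ case. A fallback, should the combinatorics prove too delicate to handle uniformly in $m$, is to locate a subalgebra copy of the $m=2$ complex structure II of \Cref{sl3} inside $(\mathfrak{sl}(2m-1,\R),J_{\mathfrak q})$ and to lift the obstruction along an induced holomorphic map, at the cost of verifying that strong positivity and $\partial\bar\partial$-exactness are preserved under the lift.
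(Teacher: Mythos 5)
Your primary route has a genuine gap, and your fallback is the right idea but with the functoriality pointed the wrong way. The paper's proof of this corollary is a two-line restriction argument: the complex subalgebra of $(\mathfrak{sl}(2m-1,\R),J_{\mathfrak q})$ spanned by $\tilde H_{m-1},\,e_0,\,e_{\alpha_m},\,e_{\gamma_{m-1}}$ and their $\sigma$-conjugates is a copy of $(\mathfrak{sl}(3,\R),J_{II})$ from \Cref{sl3}; the \emph{restriction} of a hypothetical pluriclosed metric to this $J_{\mathfrak q}$-invariant subalgebra is again a positive $(1,1)$-form with $\partial\bar\partial\omega=0$ (restriction to a subalgebra commutes with the Chevalley--Eilenberg differential and preserves positivity), contradicting the computation in \Cref{sl3}. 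Your fallback instead proposes to \emph{lift} the obstruction form $3\alpha^{013\bar0\bar1\bar3}$ from the subalgebra to the ambient algebra ``along an induced holomorphic map''. There is no such map: the inclusion of a subalgebra induces restriction of forms, not extension, and extending a form by zero (or by any choice of complement) does not commute with $d$ unless the complement is an ideal, which is not the case here. So the lift you would need to verify does not exist in general; the argument must be run by pulling the metric back, not pushing the obstruction forward.

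Your main route is also incomplete at two specific points. First, ``$\partial\bar\partial(\alpha^{0\bar0})$ carries a strongly positive $(2,2)$-summand'' is not enough; even in the $m=2$ model the relevant form $\partial\bar\partial(\alpha^{1\bar1})=3\alpha^{01\bar0\bar1}+\tfrac13\alpha^{13\bar1\bar3}$ happens to be a sum of positive decomposable terms, and you give no reason why for $m\ge3$ the indefinite terms of $\partial\bar\partial(\alpha^{0\bar0})$ are absent or are killed by wedging with $\Psi$ without also killing the positive part. Second, the requirement that $\Psi=\bigwedge_{j\in S}\bigl(i\,\alpha^j\wedge\overline{\alpha^j}\bigr)$ be $\partial$- and $\bar\partial$-closed is a strong, unverified hypothesis: the duals of root vectors are very far from closed (cf.\ the structure equations in \Cref{sl3}, where every $d\alpha^j\neq0$), so the Leibniz reduction $\partial\bar\partial(\alpha^{0\bar0}\wedge\Psi)=\partial\bar\partial(\alpha^{0\bar0})\wedge\Psi$ cannot be assumed. (Minor point: the detour through a cocompact lattice $\Gamma\backslash\SL(2m-1,\R)$ is unnecessary for this invariant statement; as recalled at the end of \Cref{section2}, unimodularity of the Lie algebra already makes the Stokes-type pairing argument work.)
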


\begin{proof}
$(\mathfrak{sl}(2m-1,\R), J_{\mathfrak q})$ contains the complex subalgebra $(\mathfrak{sl}(3,\R),J_{II})$ described in \Cref{sl3}, generated by $\mathcal B_3=\pg{\tilde H_{m-1},e_0,e_{\alpha_m},e_{\gamma_{m-1}}}$ and $\sigma\pt{\mathcal{B}_3}$.
We just showed that this is not pluriclosed, so $(\mathfrak{sl}(2m-1,\R), J_{\mathfrak q})$ cannot be either.
\end{proof}

\begin{remark}
We note that the results in \Cref{thmSL} and \Cref{slnSKT} are in accordance with the Fino-Vezzoni conjecture \cite{FV}, stating that a compact, complex, non-K\"ahler manifold cannot admit both balanced and pluriclosed metrics.
\end{remark}

\subsection{Regular complex structures on  \texorpdfstring{$\mathfrak{sl}(3,\R)$}{sl(3,R)}}

We know by \cite{AGT} that, besides $J$, $\mathfrak{sl}(3,\R)$ admits just one more family of complex structures, up to isomorphism.
This family, $\I$, $\abs{\lambda}<1$, is defined by having the following complex structure equations
\begin{equation*}\begin{aligned}
    &\pq{u,x}=(2-\lambda)x,\quad
    \pq{u,y}=(2\lambda-1)y,\quad
    \pq{u,z}=(\lambda+1)z,\quad
    \pq{x,y}=z,\\
    &\pq{u,\bar x}=(1-2\lambda)\bar x,\quad
    [u,\bar y]=(\lambda-2)\bar y,\quad
    [u,\bar z]=-(\lambda+1)\bar z,\\
    &[x,\bar y]=\frac{u+\lambda\bar u}{1-\abs\lambda^2},\quad
    [x,\bar z]=-\bar x,\quad
    [y,\bar z]=\bar y,\quad
    [z,\bar z]=\frac{(1-\bar\lambda)u-(1-\lambda)\bar u}{1-\abs\lambda^2}.
\end{aligned}\end{equation*}
It is then clear that $\I$ is regular with respect to $\mathfrak h=\langle u,\bar u\rangle\subset\mathfrak{sl}(3,\C)$.
Moreover, with the same argument as above, we can see that the Hermitian metric on $\pt{\mathfrak{sl}(3,\R),\I}$, with unitary basis $\pg{u,x,x-y,z}$, is balanced, proving the following.

\begin{proposition}
    Every complex structure on $\mathfrak{sl}(3,\R)$ admits a {compatible} balanced metric.
\end{proposition}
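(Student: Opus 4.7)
The approach is to combine the classification of complex structures on $\mathfrak{sl}(3,\R)$ from \cite{AGT}---which states that, up to isomorphism, only two families exist, namely the non-regular structure $J_{\mathfrak{q}}$ of \Cref{lemmaCpxStr} (labelled II) and the regular one-parameter family $I_\lambda$ with $|\lambda|<1$---with the existence result in \Cref{thmSL}, which already handles $J_{\mathfrak{q}}$. Only the family $I_\lambda$ remains, and the basis $\{u, x, x-y, z\}$ proposed just before the statement is the natural uniform candidate, independent of $\lambda$.

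As in the proof of \Cref{thmSL}, I would apply the Abbena--Vezzoni criterion \cite[Lemma 1]{AV}: since $\mathfrak{sl}(3,\R)$ is unimodular, the Hermitian metric declaring $\{u, x, x-y, z\}$ to be unitary is balanced if and only if $\sum_{j=1}^{4} [v_j, \sigma(v_j)] = 0$. Expanding with the brackets of $I_\lambda$ naturally splits the computation into a Cartan piece and a root-space piece. On the Cartan side, $[u, \bar u] = 0$ since both vectors generate $\mathfrak h$, the bracket $[y, \bar x]$ is recovered from $[x, \bar y]$ via the real structure by $[y, \bar x] = -\sigma([x, \bar y])$, and a direct manipulation over the common denominator $1 - |\lambda|^2$ shows that $-[x, \bar y] - [y, \bar x] + [z, \bar z] = 0$.

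The remaining root-space contribution is $2[x, \bar x] + [y, \bar y]$, and the crucial point is that both brackets vanish on weight-theoretic grounds. Indeed, reading the $u$- and $\bar u$-eigenvalues off the structure equations, $x$ and $y$ are carried by two positive roots $\alpha_x, \alpha_y$ of the root system of $\mathfrak{sl}(3, \C)$ with respect to $\mathfrak h = \langle u, \bar u\rangle$; since $\alpha_x + \alpha_y$ is also a root---precisely the weight of $z$, by $[x,y]=z$---these are the two simple roots of $A_2$. Moreover $\sigma$ sends the $\alpha_x$-root space to the $-\alpha_y$-root space, so $[x, \bar x]$ and $[y, \bar y]$ have weights $\pm(\alpha_x - \alpha_y)$, which is not a root of $A_2$, forcing both brackets to be identically zero. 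Since the argument never uses the specific value of $\lambda$, the same basis yields a balanced metric uniformly across the family, and combined with \Cref{thmSL} this exhausts the classification. The main care needed is the bookkeeping of weights and of the $\sigma$-action on the structure constants; I do not foresee any deeper obstacle.
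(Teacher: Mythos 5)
Your proposal is correct and follows exactly the paper's route: reduce to the family $\I$ via the classification in \cite{AGT} and \Cref{thmSL}, then verify the balanced criterion of \cite[Lemma 1]{AV} for the unitary basis $\pg{u,x,x-y,z}$ (the paper merely asserts this verification, which your weight-space argument for $[x,\bar x]=[y,\bar y]=0$ and the cancellation $-[x,\bar y]-[y,\bar x]+[z,\bar z]=0$ correctly supply).
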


\bigskip

{\bf Acknowledgements.} 
This collaboration started while third named author was a Postdoctoral Researcher at ICMS and IMI-BAS, which she warmly thanks for the hospitality.

Anna Fino is partially supported by Project PRIN 2022 \lq \lq Geometry and Holomorphic Dynamics”, by GNSAGA (Indam) and by a grant from the Simons Foundation (\#944448).  Gueo Grantcharov is partially supported by a grant from the Simons Foundation (\#853269).
Asia Mainenti is partly supported by the Bulgarian Ministry of Education and Science, Scientific Programme "Enhancing the Research Capacity in Mathematical Sciences (PIKOM)", No. DO1-67/05.05.2022 and by the PNRR-III-C9-2023-I8 grant CF 149/31.07.2023 {\em Conformal Aspects of Geometry and Dynamics}.

\bibliographystyle{plain}

\end{document}